\let\OLDthebibliography\thebibliography
\renewcommand\thebibliography[1]{
  \OLDthebibliography{#1}
  \setlength{\parskip}{0pt}
  \setlength{\itemsep}{0pt plus 0.3ex}
}
\newtheorem{thm}{Theorem}
\newtheorem{mainthm}{Theorem}
\newtheorem{lem}[thm]{Lemma}
\newtheorem{prop}[thm]{Proposition}
\theoremstyle{definition} 
\newtheorem{defn}[thm]{Definition}
\newtheorem{example}[thm]{Example}
\newtheorem{qu}[thm]{Question} 
\newtheorem{rmk}[thm]{Remark}
\theoremstyle{remark} 
\newcommand{\dmo}{\DeclareMathOperator}
\newcommand{\R}{\mathbb{R}}
\newcommand{\Q}{\mathbb{Q}}
\newcommand{\co}{\mathbb{C}}\newcommand{\Z}{\mathbb{Z}}
\newcommand{\al}{\alpha}\newcommand{\be}{\beta}\newcommand{\ga}{\gamma}\newcommand{\ep}{\epsilon}\newcommand{\ze}{\zeta}
\newcommand{\Om}{\Omega}\newcommand{\la}{\lambda}
\newcommand{\Ga}{\Gamma}
\newcommand{\wtil}{\widetilde}\newcommand{\Si}{\Sigma}\newcommand{\Lam}{\Lambda}
\newcommand{\cd}{\cdots}
\newcommand{\sbs}{\subset}
\newcommand{\xra}{\xrightarrow}
\newcommand{\car}{\curvearrowright}
\newcommand{\ra}{\rightarrow}
\newcommand{\hra}{\hookrightarrow}\newcommand{\onto}{\twoheadrightarrow}
\newcommand{\bb}[1]{\mathbb{#1}}\newcommand{\ca}[1]{\mathcal{#1}}\newcommand{\un}[1]{\underline{#1}}\newcommand{\mf}{\mathfrak}
\newcommand{\fr}[2]{\frac{#1}{#2}}
\newcommand{\rt}{\rtimes}\newcommand{\ot}{\otimes}
\newcommand{\ti}{\times}
\newcommand{\pair}[1]{\langle #1 \rangle}
\dmo{\sgn}{sign}
\dmo{\we}{\wedge}
\dmo{\ind}{ind}\dmo{\Ind}{Ind}
\dmo{\bop}{\bigoplus}\dmo{\pic}{Pic}
\dmo{\coker}{coker}\dmo{\vol}{Vol}\dmo{\gal}{Gal}\dmo{\perm}{Perm}
\dmo{\tor}{Tor}\dmo{\ext}{Ext}\dmo{\Ext}{Ext}
\dmo{\aut}{aut}
\dmo{\Aut}{Aut}
\dmo{\inn}{Inn}\dmo{\var}{Var}
\dmo{\dep}{depth}
\dmo{\ad}{ad}\dmo{\curl}{curl}
\dmo{\hy}{\bb H}\dmo{\Sl}{SL}
\dmo{\SO}{SO}\dmo{\psl}{PSL}
\dmo{\isom}{Isom}\dmo{\Isom}{Isom}
\dmo{\conf}{Conf}
\dmo{\stab}{Stab}\dmo{\Jac}{Jac }
\dmo{\diam}{diam}\dmo{\fix}{Fixed}\dmo{\Fix}{Fix}
\dmo{\injR}{injRad}\dmo{\Ad}{Ad}
\dmo{\esv}{ess-vol}\dmo{\out}{Out}\dmo{\Out}{Out}
\dmo{\nil}{Nil}\dmo{\sol}{Sol}
\dmo{\Div}{div}
\dmo{\SU}{SU}
\dmo{\SP}{SP}
\dmo{\Sp}{Sp}
\dmo{\rk}{rk}
\dmo{\rank}{rank}
\dmo{\psp}{PSp}\dmo{\psu}{PSU}
\dmo{\PU}{PU}\dmo{\pgl}{PGL}
\dmo{\Mod}{Mod}\dmo{\range}{Range}
\dmo{\eu}{eu}\dmo{\mi}{mi}
\dmo{\Log}{Log}\dmo{\supp}{supp}
\dmo{\maps}{Maps}\dmo{\Gr}{Gr}
\dmo{\Pin}{Pin}
\dmo{\Spin}{Spin}\dmo{\Str}{Str}
\dmo{\Sq}{Sq}\dmo{\Symp}{Symp}
\dmo{\pd}{PD}\dmo{\PD}{PD}\dmo{\sig}{Sig}
\dmo{\Set}{Set}\dmo{\Top}{Top}
\dmo{\ev}{ev}\dmo{\St}{St}
\dmo{\Pt}{Pt}\dmo{\pt}{pt}
\dmo{\colim}{colim }\dmo{\Pl}{PL}
\dmo{\String}{String}\dmo{\smear}{smear}
\dmo{\dev}{dev}
\dmo{\met}{Met}\dmo{\contact}{Contact}
\dmo{\teich}{Teich}\dmo{\Teich}{Teich}\dmo{\qi}{QI}
\dmo{\der}{Der}
\dmo{\cl}{Cliff}\dmo{\Cl}{Cl}
\dmo{\Pf}{Pf}
\dmo{\GL}{GL}
\dmo{\PSL}{PSL}
\dmo{\ch}{ch}\dmo{\diag}{diag}
\dmo{\grad}{grad}\dmo{\Char}{char}
\dmo{\spec}{Spec}\dmo{\Arg}{Arg}
\dmo{\rad}{rad}\dmo{\im}{Im}
\dmo{\Hom}{Hom}\dmo{\End}{End}
\dmo{\tr}{tr}\dmo{\id}{Id}
\dmo{\gl}{GL}
\dmo{\sym}{Sym}\dmo{\Sym}{Sym}
\dmo{\com}{Comm}
\dmo{\Lk}{Lk}
\dmo{\CAT}{CAT}
\dmo{\Rep}{Rep}
\dmo{\Res}{Res}
\dmo{\Conf}{Conf}
\dmo{\PConf}{PConf}
\dmo{\Push}{Push}
\dmo{\Cont}{Cont}
\dmo{\sm}{\setminus}
\dmo{\vn}{\varnothing}
\dmo{\disk}{\mathbb D}
\dmo{\Trd}{Trd}\dmo{\Mat}{Mat}
\dmo{\Riem}{Riem}
\dmo{\Diffn}{\Diff_0}\dmo{\diff}{diff}
\dmo{\Diff}{Diff}\dmo{\homeo}{Homeo}
\dmo{\Homeo}{Homeo}\dmo{\Fr}{Fr}
\dmo{\rot}{rot}\dmo{\Emb}{Emb}
\dmo{\Ham}{Ham}\dmo{\Met}{Met}
\dmo{\Ein}{Ein}\dmo{\CP}{\co P}
\dmo{\Per}{Per}\dmo{\Ric}{Ric}
\newcommand{\C}{\mathbb C}\dmo{\Nrd}{Nrd}
\dmo{\Comp}{Comp}\dmo{\PSC}{PSC}
\dmo{\Cent}{Cent}\dmo{\Orb}{Orb}
\dmo{\aind}{a-ind}\dmo{\tind}{t-ind}
\dmo{\constant}{constant}
\dmo{\Td}{Td}
\dmo{\LMod}{LMod}
\dmo{\SMod}{SMod}
\dmo{\SDiff}{SDiff}
\dmo{\Br}{Br}
\dmo{\csch}{csch}
\dmo{\triv}{triv}
\dmo{\genus}{genus}
\dmo{\Homeq}{HomEq}
\dmo{\PP}{\mathbb{P}}
\dmo{\U}{U}
\dmo{\Gal}{Gal}
\dmo{\BDiff}{\wtil{\Diff}}
\dmo{\BAut}{\wtil{\Aut}}
\dmo{\Iso}{Iso}
\dmo{\SL}{SL}
\dmo{\codim}{codim}
\dmo{\II}{II}
\dmo{\I}{I}
\dmo{\InjRad}{InjRad}
\dmo{\Inn}{Inn}
\dmo{\sys}{sys}
\dmo{\Comm}{Comm}
\dmo{\PO}{PO}
\dmo{\POm}{P\Om}
\dmo{\ab}{ab}
\dmo{\PSO}{PSO}
\begin{document}

\title{Arithmeticity of groups $\Z^n\rt\Z$}

\author{Bena Tshishiku}



\date{\today}


\maketitle

\begin{abstract}
We study when the group $\Z^n\rt_A\Z$ is arithmetic, where $A\in\GL_n(\Z)$ is hyperbolic and semisimple. We begin by giving a characterization of arithmeticity phrased in the language of algebraic tori, building on work of Grunewald--Platonov. We use this to prove several more concrete results that relate the arithmeticity of $\Z^n\rt_A\Z$ to the reducibility properties of the characteristic polynomial of $A$. Our tools include algebraic tori, representation theory of finite groups, Galois theory, and the inverse Galois problem.
\end{abstract}

\section{Introduction}

This paper focuses on the following question. 

\begin{qu}\label{q:arithmetic}
Fix $A\in\GL_n(\Z)$. When is the semidirect product $\Ga_A:=\Z^n\rt_A\Z$ an arithmetic group? 
\end{qu}

Recall that a group $\Ga$ is called \emph{arithmetic} if it embeds in an algebraic group $G$ defined over $\Q$ with image commensurable to $G(\Z)$. 

{\bf Standing assumption.} We restrict focus to the generic case when $A$ is \emph{hyperbolic} (no eigenvalues on the unit circle) and \emph{semisimple} (diagonalizable over $\C$). 

With the standing assumption, Question \ref{q:arithmetic} can be answered in terms of the eigenvalues of $A$ as follows. Let $\chi(A)=\mu_1^{n_1}\cdots\mu_m^{n_m}$ be the characteristic polynomial, decomposed into irreducible factors over $\Q$. Choose a root $\la_i$ of $\mu_i$, and view it as an element in (the $\Z$-points of) the algebraic torus $R_{\Q(\la_i)/\Q}(G_m)$, where $G_m$ is the multiplicative group and $R_{K/\Q}(\cdot)$ denotes the restriction of scalars. 

\begin{mainthm}[Arithmeticity criterion]\label{thm:arithmeticity}
Fix $A\in\GL_n(\Z)$ hyperbolic and semisimple, with characteristic polynomial $\chi(A)=\mu_1^{n_1}\cd\mu_m^{n_m}$ and eigenvalues $\la_1,\ldots,\la_m$ as above. View $\la=(\la_1,\ldots,\la_m)$ as an element of $T(\Z)$, where $T$ is the algebraic torus $\prod_{i=1}^m R_{\Q(\la_i)/\Q}(G_m)$. Let $S\subset T$ be the Zariski closure of the subgroup generated by $\la$. The following are equivalent.
\begin{enumerate}[(i)]
\item The group $\Z^n\rt_A\Z$ is arithmetic. 
\item The rank of $S(\Z)$ as an abelian group is $1$. 
\end{enumerate}
\end{mainthm}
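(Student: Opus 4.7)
The plan is to identify the $\Q$-algebraic group $G_A := V \rt S$---where $V = \Q^n$ and $S$ acts via its inclusion $S \hookrightarrow T \hookrightarrow \GL(V)$ coming from the $\Q$-linear decomposition $V \cong \bigoplus_i \Q(\la_i)^{n_i}$ (under which $\la$ corresponds to $A$)---as the $\Q$-algebraic hull of $\Ga_A$. Under this identification, $\Ga_A = \Z^n \rt \lan\la\ran$ embeds into $(V \rt S)(\Z) = \Z^n \rt S(\Z)$ (up to commensurability arising from the integral structure), and the index $[(V \rt S)(\Z) : \Ga_A]$ equals, up to finite error, $[S(\Z) : \lan\la\ran]$. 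Since $\la$ has infinite order by hyperbolicity of $A$, this index is finite if and only if $\rank S(\Z) = 1$.

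The implication (ii)$\Ra$(i) is then immediate: if $\rank S(\Z) = 1$ then $\Ga_A$ has finite index in the arithmetic group $(V \rt S)(\Z)$, and is therefore itself arithmetic.

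For (i)$\Ra$(ii), suppose $\Ga_A$ embeds with commensurable image into some $\Q$-algebraic group $H$, and fix a Levi decomposition $H = U_H \rt L_H$. First, $\Ga_A \cap U_H$ is a normal nilpotent subgroup of $\Ga_A$, hence contained in the Fitting subgroup of $\Ga_A$, which equals $\Z^n$: any normal nilpotent subgroup with nontrivial image in $\Ga_A / \Z^n \cong \Z$ would force $A^k$ (for some $k \neq 0$) to act nilpotently on a nonzero sublattice, contradicting the hyperbolicity of $A$. Combined with the commensurability of $\Ga_A \cap U_H$ with $U_H(\Z)$, this forces $U_H \cong V$ as $\Q$-groups. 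Second, the image of $\Ga_A$ in $L_H$ is commensurable with both $L_H(\Z)$ and $\lan A \ran \cong \Z$, so $\rank L_H(\Z) = 1$. Third, conjugation yields a $\Q$-representation $\rho : L_H \to \GL(V)$ with $\rho(A) = A$; passing to its image (and to a finite-index subgroup of $\Ga_A$ if needed) we may assume $L_H \subset \GL(V)$. Since $T$ is Zariski closed in $\GL(V)$, the Zariski closure of $\lan A \ran$ in $\GL(V)$ is precisely $S$, so $S \subseteq L_H$, giving $S(\Z) \subseteq L_H(\Z)$ and $\rank S(\Z) \le 1$. The reverse inequality is automatic from $\lan\la\ran \subseteq S(\Z)$.

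The principal technical obstacle lies in the Levi decomposition step for (i)$\Ra$(ii): carefully identifying $U_H$ with $V$ and tracking commensurability through the Levi quotient and the representation $\rho$. These manipulations rest on the standard structure theory of $\Q$-algebraic hulls of polycyclic groups, which underpins the Grunewald--Platonov framework invoked in the abstract.
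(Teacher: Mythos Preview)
Your approach is essentially the paper's (both are the Grunewald--Platonov argument), but two points deserve attention.

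First, the paper inserts a preliminary reduction you omit: using the Latimer--MacDuffee/Husert conjugacy classification, it replaces $\Ga_A$ by the fiberwise-commensurable group $\Ga_\la=\bigoplus\ca O_i^{n_i}\rt_\la\Z$. This is not logically necessary, but it separates the number theory from the algebraic-group argument and makes the ambient torus $T$ appear naturally. The paper then argues inside $H$ directly, taking $S_0$ to be the Zariski closure of $j(\Lam)$ in $H$ (using that $\Lam$ is a \emph{maximal abelian} subgroup of $\Ga_\la$ to get $\Ga\cap S_0=j(\Lam)$) and passing to $S\subset T$ via the adjoint action on $\mf u(H)$, rather than projecting to a Levi quotient as you do. The two routes are equivalent; yours needs Borel's theorem on images of arithmetic subgroups when you replace $L_H$ by $\rho(L_H)\subset\GL(V)$, which the paper also invokes at the analogous step.

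Second, and more substantively: in your ``First'' step you show only $\Ga_A\cap U_H\subseteq\Z^n$ (containment in the Fitting subgroup), yet your ``Second'' step uses the \emph{equality} $\Ga_A\cap U_H=\Z^n$ to conclude that the image of $\Ga_A$ in $L_H$ is cyclic. Nothing you have written rules out $\Z^n$ mapping with infinite image into $L_H$. The paper closes this gap by passing to a \emph{reduced} solvable $\Q$-group (citing \cite[Thm.\ 2.2]{grunewald-platonov-polycyclic}), for which it is known that $\Ga\cap U(H)$ equals the Fitting subgroup exactly \cite[Lem.\ 2.1]{grunewald-platonov-polycyclic}. You flag this as the ``principal technical obstacle,'' but it is worth naming precisely: the missing ingredient is the reduced hypothesis on $H$, and without it the argument does not go through.
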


An important component of the proof of Theorem \ref{thm:arithmeticity} is an argument of Grunewald--Platonov \cite{grunewald-platonov-polycyclic} that addresses the arithmeticity question for $\Ga=\ca O_K\rt_\la\Z$, where $\ca O_K$ is the ring of integers in a number field $K$, and $\Z$ acts on $\ca O_K$ by multiplication by a unit $\la\in\ca O_K^\ti$. To relate the work of Grunewald--Platonov to the general case, we use the conjugacy classification of hyperbolic, semisimple elements of $\GL_n(\Z)$ in terms of number fields and ideal classes. 

While Theorem \ref{thm:arithmeticity} gives a complete answer to Question \ref{q:arithmetic}, from a practical viewpoint it is not completely satisfactory because taking the Zariski closure adds a layer of computational difficulty. It would be better if condition (ii) in Theorem \ref{thm:arithmeticity} were phrased directly in terms of the eigenvalues or characteristic polynomial of $A$. To illustrate this point, the reader might try to use Theorem \ref{thm:arithmeticity} to determine if $\Ga_A$ is arithmetic for the two matrices below (this can be done in an ad hoc way, but we give a more systematic approach below). 
\begin{equation}\label{eqn:matrices}
A_1=\left(\begin{array}{rrrrrr}
0&1&0&2\\
0&0&1&0\\
0&1&0&1\\
1&0&1&0
\end{array}\right)\>\>\>\>\>
A_2=\left(
\begin{array}{rrrrrr}
0&0&0&0&-1\\
1&0&0&0&0\\
0&1&0&0&2\\
0&0&1&0&1\\
0&0&0&1&0
\end{array}
\right)\end{equation}

We prove several theorems that refine Theorem \ref{thm:arithmeticity} in special cases. Our most complete result is when all the eigenvalues of $A$ are real.

\begin{mainthm}[Improved arithmeticity criterion: totally real case]\label{thm:real}
Fix $A\in\GL_n(\Z)$ hyperbolic and semisimple, with $\chi(A)=\mu_1^{n_1}\cdots\mu_m^{n_m}$ and $\la=(\la_1,\ldots,\la_m)$ as in the statement of Theorem \ref{thm:arithmeticity}. Assume that all of the eigenvalues of $A$ are real. Then the following are equivalent.
\begin{enumerate}[(i)]
\item The group $\Z^n\rt_A\Z$ is arithmetic. 
\item After replacing $A$ by $A^k$ for some $k\ge1$, the $\la_i$ are all powers of a unit $\ep\in\ca O_L^\ti$ in a real quadratic extension $L/\Q$. 
\end{enumerate} 
\end{mainthm}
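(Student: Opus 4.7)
The plan is to apply Theorem \ref{thm:arithmeticity} and, in the totally real case, to translate the rank condition $\rk S(\Z)=1$ into the multiplicative condition (ii). The direction (ii) $\Ra$ (i) will be direct: since each $\la_i=\ep^{c_i}$ with $\la_i\ne\pm 1$ we have $L_i=\Q(\la_i)=L$ for every $i$, so $T=R_{L/\Q}(G_m)^m$. The $\Q$-morphism $\phi:R_{L/\Q}(G_m)\ra T$, $x\mapsto(x^{c_1},\ld,x^{c_m})$, has finite kernel, and its image is a two-dimensional $\Q$-subtorus $T_0\sps S$ containing $\la=\phi(\ep)$. Hence $\rk S(\Z)\le\rk T_0(\Z)=\rk\ca O_L^\ti=1$ by Dirichlet (since $L$ is real quadratic); since $\la$ has infinite order, $\rk S(\Z)=1$, and Theorem \ref{thm:arithmeticity} gives arithmeticity.

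For the harder direction (i) $\Ra$ (ii), I plan to exploit the fact that in the totally real case $T$ and hence $S$ are $\R$-split, so $\rk S(\Z)$ is accessible via Dirichlet-style log-map analysis. Index the eigenvalues of $A$ (with multiplicity) by pairs $\beta=(i,\sigma)$ with $\sigma:L_i\hra\bar\Q$, write $\la_\beta:=\sigma\la_i$, and let $M\sbs\Z^n$ ($n=\sum[L_i:\Q]$) be the lattice of multiplicative relations among the $\la_\beta$. Via the logarithmic embedding $\ell:T(\R)_+\hra\R^n$, the image $\ell(T(\Z))$ is a full lattice in the hyperplane $H=\{v:\sum_\sigma v_{i,\sigma}=0\ \fa i\}$ coming from $N_{L_i/\Q}(\la_i)=\pm 1$, while $\ell(S(\R)_+)$ is the kernel $V=M^\perp\sbs\R^n$. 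Since each spanning vector $\mathbf 1_i$ of $H^\perp$ already lies in $M\ot\Q$ (because $N_{L_i/\Q}(\la_i)^2=1$), we obtain
\[\rk S(\Z)=\dim(V\cap H)=n-\rk M,\]
so the hypothesis $\rk S(\Z)=1$ is equivalent to the subgroup $U:=\lan\la_\beta\ran\sbs\bar\Q^\ti$ having free rank one.

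To conclude (ii) I will pick $\ep\in U$ whose class generates $U/{\rm torsion}\cong\Z$ and pass to a suitable power of $A$ to kill the finite torsion of $U$, so that $\la_\beta=\ep^{c_\beta}$ exactly, with $\gcd(c_\beta)=1$ and every $c_\beta\ne 0$ (the latter by hyperbolicity). The Galois identity $\tau\la_\beta=\la_{\tau\beta}$ becomes $(\tau\ep)^{c_\beta}=\ep^{c_{\tau\beta}}$, and comparing indices forces $c_{\tau\beta}=e(\tau)c_\beta$ with $e(\tau)\in\Z$ (using $\gcd(c_\beta)=1$), producing a multiplicative homomorphism $e:\gal(F/\Q)\ra\Z$ where $F:=\Q(\ep)=\Q(\la_\beta:\beta)$. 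Finiteness of $\gal(F/\Q)$ forces $e$ into $\{\pm 1\}$, and $\ker e$ is trivial because any $\tau\in\ker e$ fixes $\ep$ (and therefore $F$); so $[F:\Q]\le 2$. Hyperbolicity gives $[F:\Q]\ge 2$, and total reality gives $F\sbs\R$, so $F=:L$ is a real quadratic field, $\ep\in\ca O_L^\ti$, and each $\la_i=\ep^{c_i}$, as desired.

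The main obstacle is the first step of the hard direction: establishing the identity $\rk S(\Z)=n-\rk M$ rigorously, which requires bookkeeping how the norm constraints $N_{L_i/\Q}(\la_i)=\pm 1$ are already encoded (up to finite index) by the multiplicative-relation lattice $M$, combined with a Dirichlet-style analysis of integer points on $\R$-split $\Q$-tori. Once this identity is on the table, the Galois-theoretic analysis in the final paragraph is essentially formal.
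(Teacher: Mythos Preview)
Your proposal is correct and follows the same overall strategy as the paper, but it is phrased in the dual language of eigenvalues and their multiplicative relations rather than in terms of the character module $X(S)$. Concretely, your group $U=\langle\la_\beta\rangle$ has free rank equal to $\dim S$, and your generator $\ep$ plays the role dual to a generator of $X(S^\circ)\cong\Z$; your homomorphism $e:\Gal(F/\Q)\to\{\pm1\}$ is exactly the Galois action on $X(S^\circ)$ that the paper uses to identify the splitting field $L$. Your endgame is arguably a bit more direct than the paper's, since you read off $\Q(\ep)=L$ straight from the action of Galois on $\langle\ep\rangle$, whereas the paper analyses the $\Z[G]$-module map $X(T^1)\twoheadrightarrow X(S)$ coordinatewise to produce the diagonal factorization $S\hookrightarrow S^m\hookrightarrow T^1$.

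Regarding the step you flag as the main obstacle: the identity $\rk S(\Z)=n-\rk M$ is correct, but your log-map sketch hides a nontrivial point, namely that $V\cap\ell(T(\Z)_+)$ is a \emph{full} lattice in $V$ (a rational subspace meeting a lattice need not do so in full rank). The paper sidesteps this by first replacing $\la$ by $\la^2$ so that $\la\in T^1=\prod R^1_{K_i/\Q}(G_m)$; then $\rank_\Q S=0$ is immediate (as $S\subset T^1$), total reality gives $\rank_\R S=\dim S$, and the Dirichlet-type rank formula $\rk S(\Z)=\rank_\R S-\rank_\Q S$ yields $\rk S(\Z)=\dim S$ directly. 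In your language this is exactly $\rk S(\Z)=n-\rk M$, since $\dim S=n-\rk M'$ and $\rk M'=\rk M$. I would recommend invoking that rank formula rather than redoing a Dirichlet argument by hand; once you do, the rest of your proof goes through as written.
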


Condition (ii) of Theorem \ref{thm:real} implies in particular that each of the $\mu_i$ have degree 2, and so a power of $A$ is conjugate to a block diagonal matrix with $2\ti2$ blocks. As a simple application, the matrix $A_1$ in (\ref{eqn:matrices}) has  $\chi(A_1)=x^4-4x^2+1$, which is irreducible over $\Q$, but $\chi(A_1^2)=(x^2-4x+1)^2$, so we conclude by Theorem \ref{thm:real} that $\Ga_{A_1}$ is arithmetic. For the other matrix $A_2$ in (\ref{eqn:matrices}), $\chi(A_2)=x^5 - x^3 - 2x^2 + 1$ has non-real roots, so Theorem \ref{thm:real} does not apply to this example. 

The author does not know of an analogue of Theorem \ref{thm:real} when $A\in\GL_n(\Z)$ has complex eigenvalues. However, Theorem \ref{thm:real} motivates Question \ref{q:irreducible} below. Before stating it, we need a definition. 

\begin{defn}\label{defn:irreducible}
We say that $A\in\GL_n(\Z)$ is \emph{irreducible} if its characteristic polynomial $\chi(A)$ is irreducible over $\Q$; otherwise we say $A$ is reducible. We say that $A$ is \emph{fully irreducible} if $A^k$ is irreducible for each $k\ge1$. 
\end{defn}

Fully irreducibility of $A$ implies that $A$ is both semisimple and hyperbolic. Note also that $A$ is reducible if and only if it's conjugate in $\GL_n(\Z)$ to a block diagonal matrix $(A_1,A_2)\in\GL_{n_1}(\Z)\times\GL_{n_2}(\Z)$, where $n_1,n_2\ge1$ (here it is important to remember our standing assumption that $A$ is semisimple). 

According to Theorem \ref{thm:real}, if $A\in\GL_n(\Z)$ is fully irreducible and its eigenvalues are real, then $\Ga_A$ is arithmetic if and only if $n=2$. This points us toward the following question. 

\begin{qu}\label{q:irreducible}For which $n\ge2$, does there exist a fully irreducible $A\in\GL_n(\Z)$ so that $\Ga_A=\Z^n\rt_A\Z$ is arithmetic?
\end{qu}

We develop techniques that address Question \ref{q:irreducible}, and use them to prove the next two theorems, which display contrasting behavior. 

\begin{mainthm}[Fully irreducible, arithmetic examples in high dimension]\label{thm:examples}
For each $k\ge1$, there exists $n\ge k$ and a fully irreducible $A\in\GL_n(\Z)$ so that $\Z^n\rt_A\Z$ is arithmetic. 
\end{mainthm}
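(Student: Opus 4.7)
The strategy is to apply Theorem \ref{thm:arithmeticity} to reduce the problem to one in algebraic number theory, and then take $A$ to be the matrix of multiplication by a carefully chosen unit $\lambda \in \mathcal{O}_K^\times$ on an integral basis of $\mathcal{O}_K \cong \Z^n$. With $K = \Q(\lambda)$ and $n = [K:\Q]$, such an $A$ lies in $\GL_n(\Z)$ with characteristic polynomial equal to the minimal polynomial of $\lambda$. Theorem \ref{thm:arithmeticity}, in the irreducible case, says $\Gamma_A$ is arithmetic if and only if the Zariski closure $S$ of $\langle\lambda\rangle$ in $T = R_{K/\Q}(G_m)$ has $\rank_\Z S(\Z) = 1$; and full irreducibility of $A$ is equivalent to the condition that $\sigma(\lambda)/\lambda$ is never a nontrivial root of unity for $\sigma \in \Gal(\bar\Q/\Q) \setminus \Gal(\bar\Q/K)$. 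Thus the task reduces to constructing, for each $k \ge 1$, a number field $K$ of degree $n \ge k$ and a unit $\lambda \in \mathcal{O}_K^\times$ satisfying both conditions.

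For small $k$ this is direct, using fields of unit rank $r_1 + r_2 - 1 = 1$: for example $K = \Q(\theta)$ with $\theta^3 - \theta - 1 = 0$ (signature $(1,1)$, $n = 3$) and the totally imaginary $K = \Q(\theta)$ with $\theta^4 + \theta + 1 = 0$ (signature $(0,2)$, $n = 4$; this $K$ is non-CM, since its Galois closure has group $S_4$ which admits no intermediate subgroup yielding an index-$2$ totally real subfield of $K$). In both cases $\mathcal{O}_K^\times$ has rank $1$, making $\rank_\Z S(\Z) = 1$ automatic; full irreducibility follows from a direct substitution check, since the Galois closure of $K$ contains only the roots of unity $\pm 1$ (an abelian-quotient calculation), so any putative root-of-unity ratio reduces to $\sigma(\theta) = -\theta$, which is inconsistent with the minimal polynomial. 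However, the constraint $r_1 + r_2 - 1 = 1$ forces $n \in \{2, 3, 4\}$, so this direct approach alone does not suffice for large $k$.

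For arbitrarily large $k$, one needs a number field $K$ of degree $n \ge k$ in which the Zariski closure $S$ is a proper subtorus of $T$, cut out by genuine multiplicative relations among the Galois conjugates of $\lambda$. A natural approach is to begin with a real quadratic field $F = \Q(\sqrt d)$ with fundamental unit $\epsilon$, take $K/F$ to be a non-CM extension of degree $m$, and choose $\lambda \in \mathcal{O}_K^\times$ with $N_{K/F}(\lambda) = \epsilon$ (or another fixed unit of $F$). The norm condition, together with its $\Gal(F/\Q)$-conjugate, cuts out a $\Gamma$-stable sublattice of $X^*(T_{\bar\Q})$ whose quotient corresponds, up to finite component group, to a subtorus isomorphic to $R^1_{F/\Q}(G_m)$, which has rank-$1$ integer points. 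The main obstacle is to achieve full irreducibility in tandem with this rank-$1$ closure: the multiplicative relations needed to shrink $S$ often introduce root-of-unity ratios among conjugates, as they do in the naive construction $\lambda^m = \epsilon$, for which $\lambda^m \in F$ and $A^m$ is reducible. Avoiding this forces $K$ to be non-CM (otherwise Kronecker's theorem applied to the unimodular $\bar\lambda/\lambda$ produces a root-of-unity ratio directly) and requires $\lambda$ to be chosen generically within the norm fiber, so that the $\Gal(\bar\Q/\Q)$-action does not create torsion relations.
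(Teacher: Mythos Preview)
Your proposal is not a proof: for $k\ge5$ you describe a strategy, name its ``main obstacle,'' and then assert that it can be overcome by choosing $\lambda$ ``generically within the norm fiber,'' without ever establishing that such a choice exists. But the gap is worse than incompleteness---the strategy itself cannot work as stated.

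You want the Zariski closure $S$ of $\langle\lambda\rangle$ in $T=R_{K/\Q}(G_m)$ to be (isogenous to) the one--dimensional torus $R^1_{F/\Q}(G_m)$. Any such subtorus of $T$ corresponds to a $\Gal(P/\Q)$--equivariant surjection $\Z[G/H]\twoheadrightarrow\Z_-$, where $G=\Gal(P/\Q)$, $H=\Gal(P/K)$, and $\Z_-$ is the sign character through $G\twoheadrightarrow\Gal(F/\Q)$. Up to sign there is a unique such map, sending $gH\mapsto\pm1$ according to $g|_F$. Unwinding, one finds that $\lambda\in S(\Q)$ forces $g(\lambda)=\lambda$ for every $g\in\Gal(P/F)$, i.e.\ $\lambda\in F$. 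Hence $\Q(\lambda)\subseteq F$ has degree at most $2$, and the action of $\lambda$ on $\ca O_K$ is reducible as soon as $[K:\Q]>2$. Separately, the single norm relation $N_{K/F}(\lambda)=\ep$ does not by itself cut $S$ down to rank one: it only places $S$ inside the $(2m-1)$--dimensional torus $N_{K/F}^{-1}\bigl(R^1_{F/\Q}(G_m)\bigr)$, whose integer points generically have rank far exceeding $1$; invoking a ``$\Gal(F/\Q)$--conjugate'' of the norm condition does not add an independent relation, since the norm is already defined over $\Q$.

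The paper's proof avoids this trap by abandoning the requirement that $S$ be one--dimensional. The key point is that $\rank S(\Z)=\rank_\R(S)-\rank_\Q(S)$, so one can take $\dim S=k$ large provided $\rank_\R(S)=1$ and $\rank_\Q(S)=0$. Concretely, one sets $X(S)$ equal to a lattice in the $S_{k+1}$--representation $\Q^k\otimes\Q_-$ and invokes the solved case of the inverse Galois problem to produce a Galois extension $P/\Q$ with $\Gal(P/\Q)\cong S_{k+1}$ in which complex conjugation is a transposition; this forces $X(S)^\tau$ to have rank $1$, whence $\rank_\R(S)=1$. One then takes $K\subset P$ so that $T=R_{K/\Q}(G_m)$ is a \emph{minimal} quasi--split torus containing $S$; full irreducibility follows from this minimality via Lemma~\ref{lem:irreducible}(b), since any factoring $X(T)\twoheadrightarrow\Z[Y]\twoheadrightarrow X(S)$ through a smaller permutation module would contradict the choice of $K$.
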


\begin{mainthm}[No irreducible, arithmetic examples in prime dimension]\label{thm:prime-dim}
Fix a prime $p\ge5$. There does not exist a hyperbolic, irreducible $A\in\GL_p(\Z)$ so that $\Z^p\rt_A\Z$ is arithmetic. 
\end{mainthm}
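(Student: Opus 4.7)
The strategy is to apply Theorem \ref{thm:arithmeticity} and reduce to a Galois-theoretic classification of $\Q$-subtori of a Weil restriction. Let $K = \Q(\la)$, of degree $p$ by irreducibility of $\chi(A)$, and set $T = R_{K/\Q}(G_m)$. Let $S \subseteq T$ be the Zariski closure of $\langle \la \rangle$. By Theorem \ref{thm:arithmeticity}, arithmeticity of $\Ga_A$ would require $\rk S(\Z) = 1$. I will show that $S$ is forced to be isogenous either to the norm-one subtorus $T^1$ or to $T$ itself, whose $\Z$-points have rank at least $2$ when $p \geq 5$.

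The technical heart of the proof is a classification of $\Q$-subtori of $T$ up to isogeny. By Galois descent these correspond to $\Q[G]$-subrepresentations of the rational cocharacter space $X_*(T) \otimes \Q \cong \Q[G/H]$, where $G = \Gal(L/\Q)$ is the Galois group of the Galois closure $L$ of $K$ and $H = \Gal(L/K)$. Decomposing $\Q[G/H] = \Q \oplus W$, with $\Q$ the trivial line spanned by $\sum_\sigma e_\sigma$ and $W$ the augmentation subrepresentation, I plan to prove that $W$ is $\Q$-irreducible whenever $[G:H] = p$ is prime. It will then follow that the $\Q$-subtori of $T$ are, up to isogeny, exactly $\{1\}$, the diagonal $G_m$, $T^1$, and $T$. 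To establish the irreducibility of $W$, I will restrict to a Sylow $p$-subgroup $P \leq G$. Since $|G|$ divides $p!$ while $p = [G:H]$ divides $|G|$, we get $|P| = p$; moreover $|H|$ is coprime to $p$, so $P \cap H = \{1\}$, and hence $G/H \cong P$ as $P$-sets. So $W|_P$ is the augmentation submodule of $\Q[P]$, isomorphic to the field $\Q(\zeta_p)$, which is a simple $\Q[P]$-module. Therefore $W|_P$, and a fortiori $W$, is $\Q$-irreducible.

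Since $\la$ has infinite order by hyperbolicity and $\la \notin \Q^\ti$ because $[K:\Q] = p > 1$, the closure $S$ is neither $\{1\}$ nor the diagonal $G_m$; it must therefore be isogenous to $T^1$ or $T$. Either way, $S(\Z)$ is commensurable with $\ca O_K^\ti$, and $\rk S(\Z) = r_1 + r_2 - 1$ by Dirichlet's unit theorem, where $(r_1, r_2)$ is the signature of $K$. The equation $r_1 + r_2 - 1 = 1$ combined with $r_1 + 2r_2 = p$ yields $r_1 = 4 - p$, which is negative for $p \geq 5$ --- contradiction. The main obstacle is the $\Q$-irreducibility of $W$: the Sylow $p$-argument handles all transitive subgroups $G \leq S_p$ uniformly, without a case analysis over the solvable Frobenius groups $\Z/p \rt \Z/d$, the groups containing $A_p$, or the sporadic transitive examples such as $\PSL_2(11)$ and the Mathieu groups.
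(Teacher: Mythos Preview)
Your proof is correct and takes a genuinely cleaner route than the paper's. The paper establishes the $\Q$-irreducibility of $W=\Q[G/H]/\Q$ by a case analysis on $G=\Gal(P/\Q)$: it first splits into the totally real case (handled by Theorem~\ref{thm:real}) and the totally imaginary case, and then in the latter invokes Burnside's and Galois's theorems to separate the non-solvable case (where $G$ is $2$-transitive, so $\C[G/H]/\C$ is irreducible) from the solvable case $G\cong C_p\rtimes C_q$ (where an explicit analysis of the irreducible representations of metacyclic groups, together with a Schur index argument, gives $\Q$-irreducibility). Your Sylow-$p$ trick bypasses all of this uniformly: since $p\,\|\,|G|$ and $p\nmid|H|$, a Sylow $p$-subgroup $P\cong C_p$ acts simply transitively on $G/H$, so $W|_P$ is the augmentation ideal of $\Q[C_p]\cong\Q\times\Q(\zeta_p)$, which is simple; hence $W$ is already simple over $\Q[G]$. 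This is both shorter and more conceptual, and it makes transparent why primality of the degree is the essential hypothesis.

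Your endgame is also more direct: rather than computing $\rank_\R(T^1)$ via the fixed points of complex conjugation on $\Z[G/H]/\Z$ as the paper does, you observe that once $S^0\in\{T^1,T\}$ the rank of $S(\Z)$ is commensurably $r_1+r_2-1$, and the Diophantine system $r_1+r_2=2$, $r_1+2r_2=p$ has no solution with $r_1\ge0$ for $p\ge5$. One small wording point: to exclude $S^0=G_m$ it is not enough that $\la\notin\Q^\ti$, since $S$ need not be connected; the clean argument is that $S^0=G_m$ forces $\rank S(\Z)=\rank G_m(\Z)=0$, contradicting $\la\in S(\Z)$ of infinite order. This is implicit in your ``infinite order by hyperbolicity'' clause, so no real gap.
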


For example, $A_2$ in (\ref{eqn:matrices}) is irreducible, so $\Ga_{A_2}$ is not arithmetic by Theorem \ref{thm:prime-dim}. 

For each $n$ as in Theorem \ref{thm:examples}, our proof shows that there are infinitely many commensurability classes of arithmetic groups $\Z^n\rt_A\Z$ with $A\in\GL_n(\Z)$ fully irreducible. Theorem \ref{thm:examples} becomes easier if we replace ``fully irreducible" by ``irreducible"; for example, the matrix $A_1$ in (\ref{eqn:matrices}) is  irreducible but not fully irreducible. 

\begin{rmk} For a lattice $\Ga$ in a real semisimple Lie group $G$, much is known about the arithmeticity question, especially from the work of Margulis on superrigidity \cite{margulis}. Margulis proved that any irreducible lattice is arithmetic if $\rank_\R(G)\ge2$. He also proved that arithmeticity is characterized in terms of the commensurator $\Comm(\Ga)$: a lattice  $\Ga$ in a semisimple Lie group is arithmetic if and only if $\Ga$ has infinite index in $\Comm(\Ga)$. The groups considered in this paper are lattices in solvable Lie groups, and there are many differences between the solvable and semisimple cases. For example, by work of Studenmund \cite[Thm.\ 1.2]{studenmund}, a lattice in a solvable Lie group  always has infinite index in its commensurator, independent of arithmeticity.
\end{rmk}

\begin{rmk}
A matrix $A\in\GL_n(\Z)$ induces a linear automorphism of $T^n\cong\R^n/\Z^n$. The associated mapping torus $E_A=\frac{T^n\ti[0,1]}{(x,1)\sim(Ax,0)}$ has fundamental group $\pi_1(E)\cong\Z^n\rt_A\Z$, and $E_A$ fibers as a $T^n$-bundle $E_A\ra S^1$ with monodromy $A$. Reducibility properties of $A$ translate to reducibility of the bundle $E_A\ra S^1$ in an obvious way. For example, by Theorem \ref{thm:real} if $\Z^n\rt_A\Z$ is arithmetic and $A$ has real eigenvalues, then $E_A$ has a finite cover $E\ra E_A$ whose induced bundle $E\ra S^1$ decomposes as a fiberwise product of $T^2$ bundles. In particular, arithmeticity puts a strong constraint on the topology of the bundle when the eigenvalues are real. This topological interpretation was one of the original motivations for this paper.  
\end{rmk}

\begin{rmk}\label{rmk:pseudo}Question \ref{q:arithmetic} is a variant of---and is motivated by---an open problem in the study of hyperbolic 3-manifolds, where one considers bundles $E_\phi\ra S^1$ with fiber a surface $\Sigma$ and pseudo-Anosov monodromy $\phi\in\pi_0\Homeo(\Si)$. In this setting, Thurston proved that $E_\phi$ admits a complete hyperbolic metric (unique by Mostow rigidity), and one can ask for a characterization of those $\phi$ for which $\pi_1(E_\phi)$ is arithmetic (in $\PSL_2(\C)$). This question seems to be wide open, except for a computer-assisted computation of  Bowditch--Maclachlan--Reid \cite{BMR} that gives a complete list of the arithmetic monodromies when $\Si=T^2\setminus\{\pt\}$ is a punctured torus. 
\end{rmk}

{\bf Techniques.} A central theme in the proofs of Theorems \ref{thm:real}--\ref{thm:prime-dim} is that various problems (such as arithmeticity of $\Ga_A$, irreducibility of $E_A$, or computing the rank of $S(\Z)$ for an algebraic torus $S$) can be translated into problems about algebraic tori and their character groups. The character group $X(T):=\Hom(T,G_m)$ of an algebraic torus $T$ carries an action of the Galois group of the splitting field of $T$, and this enables the use of Galois theory and representation theory to find examples with certain properties or prove that certain examples don't exist. Our proof of Theorem \ref{thm:examples} relies on the existence of number fields with Galois group $\Gal(P/\Q)$ isomorphic to the symmetric group and complex conjugation acting as a transposition. The existence of these number fields is ensured by known instances of the inverse Galois problem. The proof of Theorem \ref{thm:prime-dim} uses the classification of transitive permutation groups of prime degree and the representation theory of metacyclic groups. 

The main novelty of this paper is in the variety of techniques used to study Questions \ref{q:arithmetic} and \ref{q:irreducible}. These techniques, while well-known, connect algebraic groups, number theory, and group theory in a new way. 

{\bf Section outline.} Sections \ref{sec:groups}, \ref{sec:NT}, and \ref{sec:tori} contain background material: \S\ref{sec:groups} on the group theory of  $\Z^n\rt_A\Z$; \S\ref{sec:NT} on the conjugacy classification for hyperbolic, semisimple elements of $\GL_n(\Z)$; and \S\ref{sec:tori} on algebraic tori. Theorems \ref{thm:arithmeticity} and \ref{thm:real} are proved in \S\ref{sec:arithmeticity} and \S\ref{sec:real}, respectively. The final section \S\ref{sec:complex} contains proofs of Theorems \ref{thm:examples} and \ref{thm:prime-dim} as well as an example illustrating Theorem \ref{thm:examples}. 

{\bf Acknowledgements.} The author thanks B.\ Farb, from whom he learned about the question mentioned in Remark \ref{rmk:pseudo}, which ultimately led to this paper, and for valuable comments that improved the organization of this paper. He also thanks N.\ Salter for comments on a draft of this paper, and he thanks N.\ Salter and D.\ Studenmund for helpful conversations. 

\section{Group theory of $\Z^n\rt_A\Z$}\label{sec:groups}

In this section we collect some basic facts about the groups $\Ga_A=\Z^n\rt_A\Z$, their isomorphism classes, and their finite-index subgroups. 

\begin{lem}[Isomorphism classes]\label{lem:isomorphism}
Fix hyperbolic matrices $A,B\in\GL_n(\Z)$. Then $\Ga_A\cong\Ga_B$ if and only if $A$ is conjugate in $\GL_n(\Z)$ to one of $B,B^{-1}$. 
\end{lem}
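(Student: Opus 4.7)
The plan is to handle the two directions separately. The ``if'' direction is a direct verification: if $A = PBP^{-1}$ for some $P \in \GL_n(\Z)$, then $(v, k) \mapsto (Pv, k)$ defines an isomorphism $\Ga_B \to \Ga_A$; and the formula $(v, k) \mapsto (v, -k)$ defines an isomorphism $\Ga_B \to \Ga_{B^{-1}}$, so composition covers the case $A = PB^{-1}P^{-1}$.

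For the ``only if'' direction, suppose $\phi\colon \Ga_A \to \Ga_B$ is an isomorphism. The crucial step is to identify the normal subgroup $\Z^n \subset \Ga_A$ intrinsically. A direct commutator computation (using the semidirect product group law) yields $[\Ga_A, \Ga_A] = (A-I)\Z^n$, and hyperbolicity of $A$ gives $\det(A - I) \ne 0$, so $\Z^n/(A-I)\Z^n$ is finite. Therefore $\Ga_A^{\ab} \cong \Z^n/(A-I)\Z^n \oplus \Z$ has torsion-free rank one, and $\Z^n$ is the kernel of the canonical projection $\Ga_A \twoheadrightarrow \Ga_A^{\ab}/\text{torsion} \cong \Z$. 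The same description applies to $\Ga_B$, so $\phi$ restricts to an isomorphism $\Z^n \to \Z^n$ given by some $P \in \GL_n(\Z)$ and descends to an automorphism $\epsilon \in \{\pm 1\}$ of the quotient $\Z$.

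To conclude, I would write $\phi((0,1)_A) = (u, \epsilon)_B$ for some $u \in \Z^n$. Applying $\phi$ to the defining relation $(0,1)_A(v,0)_A(0,-1)_A = (Av,0)_A$ and expanding the right-hand side in $\Ga_B$, the $u$-terms cancel and the identity reduces to $PAv = B^\epsilon Pv$ for all $v \in \Z^n$; equivalently $A = P^{-1} B^\epsilon P$, which is exactly the desired conclusion.

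The main obstacle is the first step---proving that $\Z^n$ is a characteristic subgroup---and this is precisely where hyperbolicity enters. Without it the abelianization could have torsion-free rank greater than one (e.g.\ $A = I$ makes $\Ga_A \cong \Z^{n+1}$), and no canonical copy of $\Z^n$ would exist. Once $\Z^n$ is pinned down, the remaining verification is essentially formal bookkeeping about semidirect products.
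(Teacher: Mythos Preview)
Your proof is correct and slightly different from the paper's. Both arguments hinge on showing that any isomorphism $\phi$ sends the $\Z^n$ in $\Ga_A$ onto the $\Z^n$ in $\Ga_B$, and both use hyperbolicity here, but the mechanisms differ. You identify $\Z^n$ intrinsically as the kernel of $\Ga_A\twoheadrightarrow (\Ga_A^{\ab})/\text{torsion}\cong\Z$, using that $\det(A-I)\neq 0$ makes $\Z^n/(A-I)\Z^n$ finite; this makes $\Z^n$ characteristic in one stroke and immediately forces the induced map on the $\Z$-quotient to be $\pm1$. The paper instead argues directly inside $\Ga_B$: if $(x,i)\in\Phi(\Z^n)$ then normality gives $(Bx,i)\in\Phi(\Z^n)$, so $(Bx-x,0)\in\Phi(\Z^n)$, and since $Bx-x\neq 0$ (hyperbolicity) and $\Phi(\Z^n)$ is abelian, the commutator $[(x,i),(Bx-x,0)]=(B^i(Bx-x)-(Bx-x),0)$ must vanish, forcing $i=0$. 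The paper then separately argues that the $\Z$-coordinate $j$ of $\Phi(0,1)$ is $\pm1$ via a divisibility/root argument. Your route is a bit more conceptual and packages the two steps (characteristic subgroup and $\epsilon=\pm1$) into one observation about the abelianization; the paper's route is more hands-on but avoids computing $[\Ga_A,\Ga_A]$. The final conjugation computation is the same in both.
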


\begin{proof}
We write elements of $\Ga_A$ as pairs $(x,i)\in\Z^n\ti\Z$ with multiplication 
\[(x,i)(y,j)=(x+A^iy,i+j).\] If $B=CAC^{-1}$, it is easy to check that $(x,i)\mapsto(Cx,i)$ defines an isomorphism $\Ga_A\ra\Ga_B$. If $B^{-1}=CAC^{-1}$, then we conclude that $\Ga_A\cong\Ga_B$ using the above argument together with the fact that $\Ga_B\cong\Ga_{B^{-1}}$ for any $B$. The latter isomorphism is easy to see from the point-of-view of mapping tori since the map $T^n\ti [0,1]\ra T^n\ti[0,1]$ defined by $(\theta,t)\mapsto(\theta,1-t)$ descends to a homeomorphism $E_B\cong E_{B^{-1}}$. 

For the converse, suppose that $\Phi:\Z^n\rt_A\Z\ra\Z^n\rt_B\Z$ is an isomorphism. First we show that $\Phi(\Z^n)=\Z^n$. Suppose that $(x,i)\in\Phi(\Z^n)$. Then also $(Bx,i)\in \Phi(\Z^n)$ because $\Phi(\Z^n)$ is normal in $\Ga_B$, and so $(Bx-x,0)=(Bx,i)(x,i)^{-1}$ is also in $\Phi(\Z^n)$. The vector $y:=Bx-x$ is nonzero because $B$ is hyperbolic. Now since $\Phi(\Z^n)$ is abelian, 
\[(0,0)=(x,i)(y,0)(x,i)^{-1}(y,0)^{-1}=(B^iy-y,0).\]This implies that $i=0$, again since $B$ is hyperbolic. Hence $\Phi$ restricts to $C:\Z^n\ra\Z^n$ for some $C\in\GL_n(\Z)$. 

Next write $\Phi(0,1)=(z,j)$. Computing $\Phi$ on $(0,1)(x,0)(0,-1)$ in two ways, we find that $B^jCx=CAx$ for all $x\in\Z^n$, which implies that $B^j=CAC^{-1}$. Here $j=\pm1$ because $\Phi(-C^{-1}z,1)=(0,j)$, 
which implies that $(-C^{-1}z,1)$ has a $j$-th root, so $j$ divides 1. 
\end{proof}

{\bf Commensurability.} Recall that groups $\Ga_1,\Ga_2$ are \emph{commensurable} if there is a group $\Ga_3$ that embeds as a finite-index subgroup $\Ga_3\hra\Ga_i$ for $i=1,2$. 

It is easy to show that any finite-index subgroup of $\Z^n\rt_A\Z$ has the form $L\rt_{A^k}\Z$, where $L\sbs\Z^n$ is an $A^k$-invariant sublattice. 

We say that $\Ga_{A_1}$ and $\Ga_{A_2}$ are \emph{fiberwise commensurable} if there exists $A_i$-invariant lattices $L_i\sbs\Z^n$ so that $L_1\rt_{A_1}\Z\cong L_2\rt_{A_2}\Z$. Using Lemma \ref{lem:isomorphism}, it is equivalent to say that the action $A_1\car L_1$ is isomorphic to either $A_2\car L_2$ or $A_2^{-1}\car L_2$. (We use the terminology \emph{fiberwise commensurable} because this definition is the group-theoretic version of the existence of a common fiberwise cover for the mapping tori $E_{A_1}$ and $E_{A_2}$.)

Fiberwise commensurability can be defined generally for semi-direct products, but it has a special property for the groups we're studying. 

\begin{lem}\label{lem:fiber-commensurable} Fix $A_1,A_2\in\GL_n(\Z)$ hyperbolic. If $\Ga_{A_1}$ and $\Ga_{A_2}$ are fiberwise commensurable, then $\Ga_{A_1}$ embeds as a finite-index subgroup of $\Ga_{A_2}$ (and vice versa). 
\end{lem}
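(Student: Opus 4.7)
The plan is to build an explicit finite-index embedding $\Ga_{A_1}\hra\Ga_{A_2}$ from the intertwiner supplied by Lemma \ref{lem:isomorphism}. By definition, fiberwise commensurability provides full-rank $A_i$-invariant sublattices $L_i\sbs\Z^n$ and an isomorphism $L_1\rt_{A_1}\Z\cong L_2\rt_{A_2}\Z$. Applying Lemma \ref{lem:isomorphism} to this isomorphism (after choosing $\Z$-bases to identify each $L_i$ with $\Z^n$) yields a group isomorphism $\phi:L_1\ra L_2$ together with a sign $j\in\{\pm1\}$ such that $\phi\ci A_1|_{L_1}=A_2^j|_{L_2}\ci\phi$.

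The technical heart of the proof is to upgrade $\phi$ from a map between sublattices to an injective homomorphism $\Z^n\ra\Z^n$ that still intertwines $A_1$ and $A_2^j$. The naive extension fails: $\phi$ extends uniquely to a $\Q$-linear automorphism $\phi_\Q:\Q^n\ra\Q^n$ satisfying $\phi_\Q A_1=A_2^j\phi_\Q$, but in general $\phi_\Q(\Z^n)\not\sbs\Z^n$. To remedy this I would clear denominators: since $\phi_\Q(\Z^n)$ is a lattice in $\Q^n$, there is a positive integer $N$ with $N\phi_\Q(\Z^n)\sbs\Z^n$. The scaled map $M:=N\phi_\Q$ is then an injective $\Z$-linear endomorphism of $\Z^n$ with finite-index image, and it still satisfies $MA_1=A_2^jM$ since scalars commute with $A_2^j$.

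Given $M$, I would define $\Phi:\Ga_{A_1}\ra\Ga_{A_2}$ by $\Phi(x,i)=(Mx,ji)$. The relation $MA_1^i=A_2^{ji}M$ combined with the multiplication rule $(x,i)(y,k)=(x+A^iy,i+k)$ makes it a routine check that $\Phi$ is a homomorphism, and injectivity follows immediately from injectivity of $M$ and $j\ne0$. Because $j=\pm1$, the image surjects onto the cyclic quotient $\Ga_{A_2}/\Z^n\cong\Z$, so the image has index exactly $[\Z^n:M(\Z^n)]$, which is finite. The ``and vice versa" clause follows by the symmetric construction using $\phi^{-1}$.

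I expect the only substantive step to be the one flagged above: recognizing that the intertwiner produced by Lemma \ref{lem:isomorphism} is defined only on the sublattice $L_1$, so one must pass to $\Q^n$ and then rescale in order to produce an honest map $\Z^n\ra\Z^n$. Once $M$ is in hand, everything else is formal from the semidirect product law, and the special property used is that the eigenvalue structure of the hyperbolic case forces $j\in\{\pm1\}$, which is precisely what makes the image still surject onto $\Z$ in the quotient.
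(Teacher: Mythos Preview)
Your argument is correct. The paper's proof reaches the same conclusion by a slightly different, more modular route: rather than unpacking the isomorphism $L_1\rt_{A_1}\Z\cong L_2\rt_{A_2}\Z$ via Lemma~\ref{lem:isomorphism}, the paper isolates the single observation that for any $A$-invariant sublattice $L\sbs\Z^n$ one has a finite-index embedding $\Ga_A\hra L\rt_A\Z$ (by choosing $c$ with $c\Z^n\sbs L$ and using $\Z^n\rt_A\Z\cong c\Z^n\rt_A\Z$). This then gives the chain $\Ga_{A_1}\hra L_1\rt_{A_1}\Z\cong L_2\rt_{A_2}\Z\hra\Ga_{A_2}$ with the given isomorphism treated as a black box. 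Your approach instead extracts the intertwiner $\phi$, extends it over $\Q$, and clears denominators; if you trace through, your map $M=N\phi_\Q$ is exactly what the paper's composite produces when $N$ is chosen so that $N\Z^n\sbs L_1$. So the scaling trick is the same in both proofs; the paper's packaging is a bit cleaner in that it never needs to invoke Lemma~\ref{lem:isomorphism} or track the sign $j$, while your version has the virtue of making the embedding completely explicit.
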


\begin{proof}
First observe that if $L\sbs\Z^n$ is an $A$-invariant lattice, then $\Ga_A$ is a finite-index subgroup of $L\rt_A\Z$ (note that the other containment is obvious). To see this, choose $c\gg0$ so that $c\Z^n\sbs L\sbs\Z^n$. Then $c\Z^n$ is also $A$-invariant, and 
$\Ga_A=\Z^n\rt_A\Z\cong c\Z^n\rt_A\Z$ is a finite-index subgroup of $L\rt_A\Z$. 

Consequently, if $\Ga_{A_1},\Ga_{A_2}$ are fiberwise commensurable with $L_1,L_2\sbs\Z^n$ as in the definition, then one obtains an inclusion of finite-index subgroups
\[\Ga_{A_1}\hra L_1\rt_{A_1}\Z\cong L_2\rt_{A_2}\Z\hra\Ga_{A_2}.\qedhere\] 
\end{proof}

\section{Number-theoretic construction of integer matrices}\label{sec:NT}

In this section we recall the conjugacy classification of semisimple, hyperbolic elements of $\GL_n(\Z)$. This is needed for the proofs of Theorems \ref{thm:arithmeticity}, \ref{thm:real}, \ref{thm:prime-dim}. As a consequence of the classification, if $A\in\GL_n(\Z)$ has eigenvalues $\la_1,\ldots,\la_m$ with multiplicities $n_1,\ldots,n_m$, then one can construct a finitely-generated abelian group $M\sbs\bigoplus \Q(\la_i)^{n_i}$ that is invariant under the diagonal action of $\la=(\la_1,\ldots,\la_m)$ so that $\Z^n\rt_A\Z$ is isomorphic to $M\rt_\la\Z$ (in the simplest example, $M=\bigoplus\ca O_i^{n_i}$, where $\ca O_i\sbs\Q(\la_i)$ is the ring of integers).

The focus on this section is the following result that classifies hyperbolic, semisimple matrices $A$ whose characteristic polynomial $\chi(A)$ is fixed. 

\begin{thm}[Latimer--MacDuffee, Wallace, Husert]\label{thm:conjugacy}
Fix $d_1,n_1,\ldots,d_m,n_m\ge1$ and set $n=\sum_{i=1}^m d_in_i$. For each $i$, fix an algebraic unit $\la_i$ with minimal polynomial $\mu_i$ of degree $d_i$. Assume that no roots of $\mu_i$ lie on the unit circle and that $\mu_i\neq\mu_j$ for $i\neq j$. Then there is a bijection 
\[\left\{\begin{array}{c}\text{conjugacy classes of }\\\text{semisimple }A\in\GL_n(\Z) \\\text{ with }  \chi(A)=\mu_1^{n_1}\cd\mu_m^{n_m}\end{array}\right\}\longleftrightarrow
\left\{\begin{array}{c}
\text{module classes of finitely-generated }\\
\text{full $\Z[\la_1]\ti\cd\ti\Z[\la_m]$-modules }\\
M\sbs\Q(\la_1)^{n_1}\oplus\cd\oplus\Q(\la_m)^{n_m}\end{array}
\right\}.\]
\end{thm}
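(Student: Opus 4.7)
The plan is to construct the bijection explicitly in both directions, generalizing the classical Latimer--MacDuffee argument (which handles the case $m=1$, $n_1=1$). By semisimplicity and distinctness of the $\mu_i$, the minimal polynomial of any $A$ with $\chi(A)=\mu_1^{n_1}\cd\mu_m^{n_m}$ is $\mu:=\mu_1\cd\mu_m$, so $\Z^n$ is naturally a module over $\Z[A]\cong\Z[x]/(\mu)$. The Chinese Remainder Theorem gives $\Q[A]\cong\prod_i\Q(\la_i)$, which identifies $\Q^n$ with $\bigoplus_i\Q(\la_i)^{n_i}$ as $\Q[A]$-modules, with $\la_i$ acting on the $i$-th summand and the multiplicity $n_i$ forced by $\chi(A)$. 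From $A$ I then produce a full $\Z[\la_1]\ti\cd\ti\Z[\la_m]$-module $M_A\sbs\bigoplus_i\Q(\la_i)^{n_i}$ containing the image of $\Z^n$.

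For the reverse direction, given a full $\prod_i\Z[\la_i]$-submodule $M\sbs\bigoplus_i\Q(\la_i)^{n_i}$, choose any $\Z$-basis of $M$ (which exists since $M$ is finitely generated of $\Z$-rank $n=\sum d_in_i$). Multiplication by the diagonal element $\la:=(\la_1,\ld,\la_m)$ is a $\Z$-linear automorphism whose matrix in this basis is some $A_M\in\GL_n(\Z)$, with $\chi(A_M)=\prod_i\mu_i^{n_i}$ because $\la_i$ acts on $\Q(\la_i)^{n_i}$ with minimal polynomial $\mu_i$. A change of $\Z$-basis conjugates $A_M$, so the conjugacy class of $A_M$ is well-defined.

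Next I would verify well-definedness in both directions: a conjugation $B=CAC^{-1}$ in $\GL_n(\Z)$ induces a $\prod_i\Z[\la_i]$-module isomorphism $M_A\cong M_B$, while conversely a module isomorphism is $\Z$-linear and commutes with multiplication by $\la$, yielding a $\GL_n(\Z)$-conjugation of the associated matrices. Checking that the two constructions are mutually inverse then amounts to unwinding the identifications: $A$ is recovered as the matrix of multiplication by $\la$ on $M_A$ in a suitable basis, and $M$ is recovered (up to its module class) as the module associated to $A_M$.

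The hard part will be reconciling the gap between $\Z[A]\cong\Z[x]/(\mu_1\cd\mu_m)$ and the larger ring $\prod_i\Z[\la_i]$ into which it embeds diagonally. The lattice $\Z^n$ is a priori only a $\Z[A]$-module: the idempotents of $\prod_i\Q(\la_i)$ projecting to individual factors are rarely integral over $\Z[A]$, so $\Z^n$ is generally not preserved by the full product ring. Making the bijection work requires a carefully chosen equivalence relation on modules (identifying those modules that generate one another over the product ring) and an extension of the classical argument to simultaneously handle multiplicities of irreducible factors (Wallace) and multiple distinct factors (Husert); this is the technical heart of the proof.
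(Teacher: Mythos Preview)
The paper does not give a proof of this theorem; it attributes the result to Latimer--MacDuffee, Wallace, and Husert (pointing to \cite[Thm.\ 1.4]{husert} for the precise formulation) and then only \emph{explains how the bijection works}. So there is no full proof to compare against, only an informal description of the two maps.

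That said, your outline and the paper's description are compatible but organized differently. The paper works concretely with eigenvectors: in the irreducible case it picks an eigenvector $w\in\Q(\la)^n$ with $Aw=\la w$ and takes the $\Z$-span of its coordinates as a fractional ideal; for a repeated irreducible factor it chooses $n_1$ independent $\la$-eigenvectors and takes the $\Z$-span of the rows of the resulting $n\times n_1$ matrix; and in the general case it first conjugates $A$ in $\GL_n(\Z)$ to block-diagonal form $(A_1,\ldots,A_m)$ with $\chi(A_i)=\mu_i^{n_i}$ and then applies the previous case blockwise. Your approach replaces the eigenvector bookkeeping by the ring isomorphism $\Q[A]\cong\prod_i\Q(\la_i)$ via CRT, which is really the same identification (an eigenvector choice is exactly a choice of $\Q[A]$-module isomorphism $\Q^n\cong\bigoplus_i\Q(\la_i)^{n_i}$). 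Your version makes the well-definedness checks cleaner; the paper's version makes the passage from $A$ to a module more explicit and sidesteps your ``hard part'' by block-diagonalizing over $\Z$ first, so that each block is already a genuine $\Z[\la_i]$-module and no extension from $\Z[A]$ to $\prod_i\Z[\la_i]$ is needed. Either way the substantive arguments are deferred to the cited references.
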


This theorem is due in various forms to Latimer--MacDuffee \cite{latimer-macduffee}, Wallace \cite{wallace}, and Husert \cite{husert}. See \cite[Thm.\ 1.4]{husert} for the statement given above. 

We explain how the bijection works, starting with two special cases. 
\begin{itemize}
\item When $m=1$ and $n_1=1$ the theorem classifies $A\in\GL_n(\Z)$ with a fixed irreducible characteristic polynomial $\mu$ (none of whose roots lie on the unit circle). If $\la$ is a root of $\mu$, then there is a bijection
\[\left\{\begin{array}{c}\text{conjugacy classes }\\\text{of } A\in\GL_n(\Z)\\ \text{ with } \chi(A)=\mu\end{array}\right\}\longleftrightarrow
\left\{\begin{array}{c}
\text{Ideal classes of }\\
\text{nonzero fractional ideals }\\I\sbs\Q(\la)\text{ of }\Z[\la] \end{array}
\right\}.\]
See \cite[Thm.\ 2]{wallace}. For the bijection, in one direction, given $A\in\GL_n(\Z)$, by basic linear algebra, one can find an eigenvector $Aw=\la w$ such that $w\in\Q(\la)^n$. If we write $w=(w_1,\ldots,w_n)$ in coordinates, then $I=\Z\{w_1,\ldots,w_n\}\sbs\Q(\la)$ is a fractional ideal of $\Z[\la]$ (the equation $Aw=\la w$ gives a way to rewrite $\la w_i$ as a $\Z$-linear combination of $w_1,\ldots,w_n$). For the other direction, given $I\sbs\Q(\la)$, choose a $\Z$-basis $w_1,\ldots,w_n\in I$. Then $\la w_1,\ldots,\la w_n\in I$ is also a $\Z$-basis for $I$. Take $A\in\GL_n(\Z)$ the matrix of the transformation $I\ra I$ taking $w_i$ to $\la w_i$ (with respect to the $w_i$-basis). See \cite{wallace} for more details. 

\item When $m=1$ and $n_1\ge1$, the theorem classifies semisimple $A\in\GL_n(\Z)$ with $\chi(A)=\mu^{n_1}$ with $\mu$ irreducible over $\Q$ (the degree of $\mu$ is $d_1$ and $n=d_1n_1$). If $\la$ is a root of $\mu$, then there is a bijection 
\[\left\{\begin{array}{c}\text{conjugacy classes of }\\\text{semisimple } A\in\GL_n(\Z)\\\text{ with } \chi(A)=\mu^{n_1}\end{array}\right\}\longleftrightarrow
\left\{\begin{array}{c}
\text{Module classes of}\\
\text{ finitely-generated, full }\\\text{ $\Z[\la]$-modules }M\sbs\Q(\la)^{n_1}\end{array}
\right\}.\]
A $\Z[\la]$-module $M\sbs\Q(\la)^{n_1}$ is called \emph{full} if its $\Q$-span is all of $\Q(\la)^{n_1}$. For the bijection, given $A\in\GL_n(\Z)$, choose linearly independent vectors $w^{(1)},\ldots,w^{(n_1)}\in\Q(\la)^{n}$ such that $Aw^{(j)}=\la w^{(j)}$ for each $1\le j\le n_1$. Next form an $(n\ti n_1)$-matrix whose $(i,j)$-entry is $w_i^{(j)}$, the $i$-th coordinate of $w^{(j)}$. Then the rows $w_i=(w^{(1)}_i,\ldots,w^{(n_1)}_i)\in\Q(\la)^{n_1}$ generate a full $\Z[\la]$-module $M=\Z\{w_1,\ldots,w_n\}$. Conversely, given a full $\Z[\la]$-module $M\sbs\Q(\la)^{n_1}$, choose a basis $M=\Z\{w_1,\ldots,w_n\}$, and take $A$ to be the matrix of multiplication by $\la$ on $M$ with respect to the given basis. See \cite[\S1.2]{husert} for more details. 
\end{itemize} 

For the general case of Theorem \ref{thm:conjugacy}, the bijection works similarly. Given semisimple $A\in\GL_n(\Z)$ with $\chi(A)=\mu_1^{n_1}\cd\mu_m^{n_m}$, conjugate $A$ to a block diagonal matrix $(A_1,\cdots, A_m)$, where $\chi(A_i)=\mu_i^{n_i}$. Then repeat the construction of the preceding paragraph for each $A_i$ to get a full $\Z[\la_i]$-module $M_i\sbs\Q(\la_i)^{n_i}$. Then set $M=M_1\oplus\cdots\oplus M_m$. The construction in the reverse direction is also similar to what was discussed above. 

In summary, given semisimple, hyperbolic $A\in\GL_n(\Z)$ with $\chi(A)=\mu_1^{n_1}\cdots\mu_m^{n_m}$, there exists a full submodule $M\sbs\Q(\la_1)^{n_1}\oplus\cd\oplus\Q(\la_m)^{n_m}$, where $\la_i$ is a root of $\mu_i$, so that the action of $A$ on $\Z^n$ is isomorphic (as $\Z[\Z]$-modules) to the diagonal action of $(\la_1,\ldots,\la_m)$ on $M\cong\Z^n$. 

\section{Proof of Theorem \ref{thm:arithmeticity}}\label{sec:arithmeticity}

Theorem \ref{thm:arithmeticity} is proved in two steps that are carried out in \S\ref{sec:char-poly} and \S\ref{sec:grunewald-platonov}. 

\un{Step 1}. We show that if $A,B\in\GL_n(\Z)$ are semisimple with the same characteristic polynomial, then $\Ga_A$ is arithmetic if and only if $\Ga_B$ is arithmetic. This allows us to reduce the proof of Theorem \ref{thm:arithmeticity} to the case $\Ga_\la=\bigoplus\ca O_i^{n_i}\rt_\la\Z$, where $\la$ and $n_i$ are as in the statement of Theorem \ref{thm:arithmeticity} and $\ca O_i\sbs\Q(\la_i)$ is the ring of integers. 

\un{Step 2}. We solve the arithmeticity problem for $\Ga_\la=\bigoplus \ca O_i^{n_i}\rt_\la\Z$. The case when $m=1$ and $n_1=1$, i.e.\ $\Ga=\ca O_K\rt_\la\Z$ for some $\la\in\ca O_K^\ti$, was solved by Grunewald--Platonov \cite{grunewald-platonov-polycyclic}, and we adapt their argument to the general case.

\subsection{Characteristic polynomial and fiberwise commensurability}\label{sec:char-poly}

Fix $A\in\GL_n(\Z)$ with $\la=(\la_1,\ldots,\la_m)$ and $n_1,\ldots,n_m$ as in the statement of Theorem \ref{thm:arithmeticity}. By \S\ref{sec:NT}, the action of $A$ on $\Z^n$ has the same characteristic polynomial as the action of $\la=(\la_1,\ldots,\la_m)$ on $M=\bigoplus\ca O_i^{n_i}$, where $\ca O_i\sbs\Q(\la_i)$ is the ring of integers. By the following proposition, $\Z^n\rt_A\Z$ is arithmetic if and only if $\Ga_\la=M\rt_\la\Z$ is arithmetic. 

\begin{lem}[Arithmeticity depends only on characteristic polynomial]\label{lem:char-poly}
Fix semisimple, hyperbolic $A,B\in\GL_n(\Z)$. If $\chi(A)=\chi(B)$, then 
\begin{enumerate}[(i)]
\item the groups $\Ga_A$ and $\Ga_B$ are fiberwise commensurable, and 
\item the group $\Ga_A$ is arithmetic if and only if $\Ga_B$ is arithmetic. 
\end{enumerate} 
\end{lem}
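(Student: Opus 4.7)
The plan is to reduce to the case where both $A$ and $B$ act as multiplication by a common $\la$ on full submodules of a common $\Q$-vector space, intersect the two submodules, and then transport the resulting $\la$-invariant sublattice back to $\Z^n$ in each of the two ways.

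First, I would apply Theorem~\ref{thm:conjugacy}: the hypothesis $\chi(A)=\chi(B)=\mu_1^{n_1}\cdots\mu_m^{n_m}$ produces full, finitely generated $R$-submodules $M_A,M_B\subset V$, where $V=\Q(\la_1)^{n_1}\oplus\cdots\oplus\Q(\la_m)^{n_m}$ and $R=\Z[\la_1]\times\cdots\times\Z[\la_m]$, together with $\Z$-basis identifications $\Z^n\xrightarrow{\sim}M_A$ and $\Z^n\xrightarrow{\sim}M_B$ which carry the actions of $A$ and $B$ on $\Z^n$ to diagonal multiplication by $\la=(\la_1,\ldots,\la_m)$ on $M_A$ and $M_B$, respectively.

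Next, I would set $L:=M_A\cap M_B\subset V$. Since $M_A$ is finitely generated and $M_B$ has $\Q$-span $V$, expressing a $\Z$-basis of $M_A$ in terms of $M_B$ and clearing denominators gives an integer $N\ge1$ with $NM_A\subset M_B$, so $NM_A\subset L\subset M_A$ and hence $L$ has finite index in $M_A$; the symmetric argument shows $L$ has finite index in $M_B$. Because multiplication by $\la$ preserves each $M_i$, it preserves $L$. Transporting $L$ through the two basis identifications yields an $A$-invariant sublattice $L_A\subset\Z^n$ (with respect to the $\Z^n$ on which $A$ acts) and a $B$-invariant sublattice $L_B\subset\Z^n$ (with respect to the $\Z^n$ on which $B$ acts), with both $A|_{L_A}$ and $B|_{L_B}$ conjugate in $\GL(L)\cong\GL_n(\Z)$ to multiplication by $\la$ on $L$. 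Applying Lemma~\ref{lem:isomorphism} to the resulting conjugacy gives $L_A\rt_A\Z\cong L\rt_\la\Z\cong L_B\rt_B\Z$, which is (i).

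For (ii), combine (i) with Lemma~\ref{lem:fiber-commensurable} to get that $\Ga_A$ and $\Ga_B$ each embed as finite-index subgroups of the other; in particular they share a common finite-index subgroup and are commensurable. Arithmeticity is a commensurability invariant (if $\Ga$ embeds in $G(\Q)$ with image commensurable to $G(\Z)$, then any group commensurable to $\Ga$ does so as well, using the same $G$), so $\Ga_A$ is arithmetic if and only if $\Ga_B$ is arithmetic. The main obstacle is the bookkeeping in the first two steps: one must unwind the bijection of Theorem~\ref{thm:conjugacy} carefully enough to know that the abstract sublattice $L\subset V$ really does correspond to honest $A$- and $B$-invariant sublattices of $\Z^n$, and that the induced restrictions are genuinely conjugate over $\Z$ to $\la$-multiplication on $L$. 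Once this is nailed down, both (i) and (ii) follow mechanically from Lemmas~\ref{lem:isomorphism} and \ref{lem:fiber-commensurable} and from the commensurability invariance of arithmeticity.
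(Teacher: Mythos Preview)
Your proposal is correct and follows essentially the same approach as the paper: realize both $A$ and $B$ via Theorem~\ref{thm:conjugacy} as multiplication by $\la$ on full modules $M_A,M_B\subset\bigoplus\Q(\la_i)^{n_i}$, intersect to get a common finite-index $\la$-invariant submodule, and then invoke Lemma~\ref{lem:fiber-commensurable} for (ii). The paper's version is terser---it simply asserts that $M_A\cap M_B$ is full and that $(M_A\cap M_B)\rt_\la\Z$ sits inside both $M_A\rt_\la\Z\cong\Ga_A$ and $M_B\rt_\la\Z\cong\Ga_B$---whereas you spell out the denominator-clearing argument for fullness and the transport back to $\Z^n$ via Lemma~\ref{lem:isomorphism}, but the underlying idea is identical.
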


\begin{proof}
First we note that (i) implies (ii): by Lemma \ref{lem:fiber-commensurable}, if $\Ga_A$ and $\Ga_B$ are fiberwise commensurable, then $\Ga_A$ is a finite-index subgroup of $\Ga_B$, and vice versa. This implies (ii) since arithmeticity is obviously inherited by finite-index subgroups. 

Proof of (i): To show $\Ga_A$ and $\Ga_B$ are fiberwise commensurable, write $\chi=\mu_1^{n_1}\cd\mu_m^{n_m}$ for the common characteristic polynomial. Choose a root $\la_i$ of $\mu_i$ for each $i$. From Theorem \ref{thm:conjugacy} and the discussion in \S\ref{sec:NT}, there exist $\prod\Z[\la_i]$-modules $M_A,M_B\sbs\bigoplus\Q(\la_i)^{n_i}$ so that $M_A\rt_\la\Z\cong\Ga_A$, and similarly for $B$. The intersection $M_A\cap M_B$ is also a full $\prod\Z[\la_i]$-module, so $(M_A\cap M_B)\rt_\la\Z$ is a finite index subgroup of both of $M_A\rt_\la\Z$ and $M_B\rt_\la\Z$. This shows $\Ga_A,\Ga_B$ are fiberwise commensurable, as desired. 
\end{proof}


\begin{rmk}\label{rmk:converse}
We note for later use that the following converse of Proposition \ref{lem:char-poly} is also true: if $\Ga_{A}$ and $\Ga_{B}$ are fiberwise commensurable, then $\chi(A)=\chi(B)$. This follows quickly from the fact that if $L\sbs\Z^n$ is an $A$-invariant lattice, then the linear maps   $A:\Z^n\ra\Z^n$ and $A: L\ra L$ have the same characteristic polynomial. \end{rmk}

\subsection{Quasi-split tori and arithmeticity}\label{sec:grunewald-platonov}

We prove Theorem \ref{thm:arithmeticity} for $\Ga_\la=\bigoplus\ca O_i^{n_i}\rt_\la\Z$, where $\la=(\la_1,\ldots,\la_m)$ and $\ca O_i\sbs\Q(\la_i)$ is the ring of integers. For notational simplicity, we set $M=\bigoplus\ca O_i^{n_i}$ and $K_i=\Q(\la_i)$. 

Consider the algebraic group 
\[T=\prod R_{K_i/\Q}(G_m),\]
where $G_m$ is the multiplicative group, and $R_{K_i/\Q}(\cdot)$ is the restriction of scalars functor (see e.g.\ \cite[\S2.1.2]{platonov-rapinchuk}). Then $T$ is defined over $\Q$, and 
\[T(\Q)=K_1^\ti\ti\cd\ti K_m^\ti\>\>\>\text{ and }\>\>\>T(\Z)=\ca O_1^\ti\ti\cd\ti\ca O_m^\ti.\]
In particular, $\la\in T(\Z)$. Let $S\sbs T$ be the Zariski closure of the subgroup $\pair{\la}\sbs T$. In Proposition \ref{prop:arithmeticity} below we prove that $\Ga_\la$ is arithmetic if and only if $S(\Z)$ has rank 1. This will finish the proof of Theorem \ref{thm:arithmeticity}. 

Terminology: If $\Lambda\sbs S(\Q)$ is commensurable with $S(\Z)$, we say $\Lam$ is an \emph{arithmetic subgroup of $S$}.

\begin{prop}[Arithmeticity criterion]\label{prop:arithmeticity}
Fix $\Ga_\la=M\rt_\la\Z$ and $T$ as above. Set $\Lam=\pair{\la}\sbs T(\Z)$, and let $S\sbs T$ be the Zariski closure of $\Lambda$. Then $\Ga_\la$ is arithmetic if and only if $\Lam$ is an arithmetic subgroup of $S$. 
\end{prop}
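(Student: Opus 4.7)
The plan is to construct a natural $\Q$-algebraic group $H$ that serves as the algebraic hull of $\Ga_\la$, and to compare arithmeticity in $H$ with arithmeticity of $\Lam$ in $S$. This mirrors the Grunewald--Platonov approach in the base case $m=n_1=1$, with extra bookkeeping to accommodate the direct sum decomposition and the multiplicities $n_i$.

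For the reverse direction ($\Leftarrow$), suppose $\Lam$ is arithmetic in $S$. Let $V$ denote the additive $\Q$-algebraic group with $\Q$-points $\bigoplus_i K_i^{n_i}$ and $\Z$-points $M$. The restriction-of-scalars formalism provides a $\Q$-rational action of $T$ on $V$ by componentwise multiplication, and this action preserves $M$ on integer points since $T(\Z)\sbs\prod\ca O_i^\ti$; hence $S\sbs T$ also acts $\Q$-rationally on $V$ preserving $M$. Setting $H:=V\rt S$ defines a $\Q$-algebraic group with $H(\Z)=M\rt S(\Z)$. The subgroup $\Ga_\la=M\rt\Lam$ then has finite index in $H(\Z)$ by assumption, which exhibits $\Ga_\la$ as arithmetic.

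For the forward direction ($\Rightarrow$), suppose $\Ga_\la$ embeds in $G(\Q)$ as an arithmetic subgroup for some $\Q$-algebraic group $G$. I would proceed in three steps. First, identify $M\sbs\Ga_\la$ group-theoretically: hyperbolicity of each $\la_i$ rules out any nilpotent normal subgroup of $\Ga_\la$ strictly larger than $M$, so $M$ equals, up to finite index, the Fitting subgroup of $\Ga_\la$. Second, conclude that $M$ is carried by the embedding into a finite-index sublattice of the $\Z$-points of the unipotent radical $U$ of $G$, which canonically identifies $U$ with $V$ as $\Q$-algebraic groups. Third, use semisimplicity of $\la$ to show that the Zariski closure in $G/U$ of the image of $\Lam$ is $\Q$-isomorphic to $S$ via an isomorphism that intertwines the two actions on $V$. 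Arithmeticity of $\Ga_\la$ in $G$ then descends to the statement that $\Lam$ has finite index in $S(\Z)$.

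The main obstacle is the third step: the Zariski closure of $\Lam$ in $G/U$ must match $S$ not merely as an abstract $\Q$-torus, but in a way that respects the action on $V$. This is where the conjugacy classification of Section \ref{sec:NT} becomes essential, since it controls how a semisimple integer matrix with fixed characteristic polynomial can act on a finite-index sublattice of $V$; the action of $\Lam$ on $M$ rigidifies the $S$-structure up to commensurability, allowing the Grunewald--Platonov identification to extend to the general case.
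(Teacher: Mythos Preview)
Your overall architecture matches the paper's: construct $V\rt S$ for the easy direction, and for the hard direction identify $j(M)$ with an arithmetic lattice in the unipotent radical $U(H)$ via the Fitting subgroup, then compare the Zariski closure of $j(\Lam)$ with $S$. But your proposed resolution of step~3 points to the wrong mechanism. The paper does not invoke the conjugacy classification of \S\ref{sec:NT} here at all; the comparison is achieved directly via the adjoint action. A choice of $\Z$-basis for $M$ gives an isomorphism $\mf u(H)(\Q)\cong M_\Q$ through the exponential map, and under this identification the conjugation action of $j(\Lam)$ on $U(H)$ is \emph{literally} the multiplication action of $\la$ on $M_\Q$ --- this is tautological from the semidirect-product structure, not something requiring the module classification. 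Since the multiplication representation $\be:T\ra\GL(M_\Q)$ is injective, the adjoint image of the Zariski closure $S_0$ of $j(\Lam)$ lands in $\be(T)$ and equals $\be(S)\cong S$. Crucially, the paper never asserts $S_0\cong S$; it only needs that a surjective $\Q$-morphism of algebraic groups carries arithmetic subgroups to arithmetic subgroups (a theorem of Borel), so arithmeticity of $j(\Lam)$ in $S_0$ descends to arithmeticity of $\Lam$ in $S$.

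Two smaller points your sketch elides. First, arithmeticity of $j(\Lam)$ in $S_0$ is established \emph{before} any passage to a quotient, using that $\Lam$ is a maximal abelian subgroup of $\Ga_\la$ (hyperbolicity forces $\Ga\cap S_0=j(\Lam)$); you defer this to after step~3, which makes the logic harder. Second, the equality $\Ga\cap U(H)=j(M)$ is not automatic: it requires first replacing $H$ by a \emph{reduced} solvable algebraic group in the sense of Grunewald--Platonov, a reduction step your step~2 does not mention.
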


In the situation of Proposition \ref{prop:arithmeticity}, $\Lam\cong\Z$ is a subgroup of $S(\Z)$, which is abelian, so $\Lam\sbs S$ is an arithmetic subgroup if and only if $\rank S(\Z)=1$. Thus the conclusion of Proposition \ref{prop:arithmeticity} gives us the desired conclusion for Theorem \ref{thm:arithmeticity}.

In the case $m=1$ and $n_1=1$, i.e.\ $\Ga_\la=\ca O_K\rt_\la\Z$ with $\la\in\ca O_K^\ti$, the proof of Proposition \ref{prop:arithmeticity} is given in \cite[Prop.\ 3.1]{grunewald-platonov-polycyclic}. The argument in the general case is a straightforward generalization, as we explain next. At times we refer to \cite{grunewald-platonov-polycyclic} for further details.

\begin{proof}[Proof of Proposition \ref{prop:arithmeticity}]
The ``if" statement is easy. If $\Lam\sbs S$ is an arithmetic subgroup, then $\Ga_\la$ is an arithmetic subgroup in $\prod R_{K_i/\Q}(G_a)^{n_i}\rt S$, where $G_a$ denotes the additive group. 

Now we prove the ``only if" statement. Assuming that $\Ga_\la$ is arithmetic, there exists a solvable algebraic group $H$ defined over $\Q$ and an embedding $j:\Ga_\la\hra H(\Q)$ whose image $\Ga=j(\Ga_\la)$ is commensurable to $H(\Z)$. 

Note: in general a solvable group $\Ga$ may be realized in as a lattice in different $H$ (this is already true for $\Ga=\Z$). The proof identifies $H$ by determining its unipotent radical and maximal torus. 

{\it Claim $1$.} The group $j(\Lam)$ is an arithmetic subgroup of its Zariski closure $S_0\sbs H$.

Here we are identifying $\Lam=\pair{\la}$ with the obvious $\Z$ subgroup of $\Ga_\la=M\rt_\la\Z$. 

{\it Proof of Claim $1$.} First observe that $\Ga\cap S_0$ is an arithmetic subgroup of $S_0$ (this follows easily from the definition of arithmeticity), so it suffices to show that $j(\Lam)=\Ga\cap S_0$.  Note that $S_0$ is abelian. In addition $\Lam\sbs\Ga_\la$ is a maximal abelian subgroup because no coordinate of $\la=(\la_1,\ldots,\la_m$) is a root of unity. Since $\Ga\cap S_0$ is an abelian subgroup of $\Ga$ and contains $j(\Lam)$, maximality of $j(\Lam)$ implies that $\Ga\cap S_0=j(\Lam)$, proving Claim 1. 

Next we relate $S_0$ and $T=\prod R_{K_i/\Q}(G_m)$. Here we use the unipotent radical $U(H)\sbs H$. As before $\Ga\cap U(H)$ is an  arithmetic subgroup of $U(H)$. 

{\it Claim $2$.} The group $\Ga\cap U(H)$ is equal to $j(M)$. 

Before explaining the claim, recall that the \emph{fitting subgroup} of a polycyclic group $\Ga$ is the unique maximal normal nilpotent subgroup (for $\Ga_\la=M\rt_\la\Z$, the fitting subgroup is $M$). We also use the notion of a \emph{reduced} solvable algebraic group over $\Q$; see \cite[\S2]{grunewald-platonov-polycyclic} for the definition. 

{\it Proof of Claim $2$.} This is true when $H$ is reduced since this implies that $\Ga\cap U(H)$ is the fitting subgroup of $\Ga$ (see \cite[Lem.\ 2.1]{grunewald-platonov-polycyclic}), and the fitting subgroup of $\Ga$ is equal to $j(M)$. Assuming that $H$ is reduced does not result in any loss of generality by \cite[Thm.\ 2.2]{grunewald-platonov-polycyclic}. This proves Claim 2.

Next we identify $U(H)$, using that $j(M)=\Ga\cap U(H)$ is an arithmetic subgroup. Choosing a $\Z$-basis for $M=\bigoplus\ca O_i^{n_i}$ gives a $\Q$-basis for $M_\Q:=\bigoplus K_i^{n_i}$ and also for the Lie algebra $\mf u(H)(\Q)$, yielding an isomorphism $M_\Q\cong U(H)(\Q)$ as in the diagram below.
\[\begin{xy}
(-15,15)*+{M}="A";
(15,15)*+{U(H)(\Q)}="B";
(-15,-0)*+{M_\Q}="C";
(15,-0)*+{\mf u(H)(\Q)}="D";
{\ar "A";"B"}?*!/_3mm/{j};
{\ar "B";"D"}?*!/_6mm/{\exp^{-1}};
{\ar@{^{(}->} "A";"C"}?*!/^6mm/{};
{\ar@{-->} "C";"D"}?*!/_3mm/{\Theta};
\end{xy}\]
The exponential map here is an isomorphism; see \cite[\S2]{grunewald-platonov-polycyclic} for more detail. Let $\be:T\ra\GL_n$ be the morphism induced by the action of $\prod K_i^\ti$ on $M_\Q\cong\Q^n$ (using the chosen basis for $M$). Via $\Theta$, we can identify this with the adjoint action of $j(\Lam)$ on $\mf u(H)$:
\[\begin{xy}
(-15,15)*+{\Lam}="A";
(15,15)*+{j(\Lam)\sbs S_0}="B";
(-15,-0)*+{T}="C";
(15,-0)*+{\GL_n}="D";
{\ar "A";"B"}?*!/_3mm/{j};
{\ar "B";"D"}?*!/_8mm/{\text{adjoint}};
{\ar "A";"C"}?*!/^6mm/{};
{\ar "C";"D"}?*!/_3mm/{\be};
\end{xy}\]
Define $S=\text{adjoint}(S_0)$. Then $\text{adjoint}\big(j(\Lam)\big)\sbs S$ is Zariski dense because $j(\Lam)\sbs S_0$ is Zariski dense. Also $\text{adjoint}(j(\Lam))\sbs S$ is an arithmetic subgroup because $j(\Lam)\sbs S_0$ is \cite[Thm.\ 6]{borel-density}. Since $\be$ (as multiplication of $\prod K_i^\ti$ on $\bigoplus K_i^{n_i}$) is injective and the diagram commutes, we identify $S$ with a subgroup of $T$ so that $\Lam\sbs S$ is an arithmetic subgroup. This completes the proof of the proposition and the proof of Theorem \ref{thm:arithmeticity}. 
\end{proof}

\begin{rmk}\label{rmk:supergroup}
When $\la\in T(\Z)$ has infinite order, then for $k\ge2$, the subgroups $\pair{\la^k}$ and $\pair{\la}$ have the same Zariski closure in $T$. To see this, let $S$ be the Zariski closure of $\pair{\la}$. Multiplication by $k$ defines a surjective morphism $S\ra S$ that sends $\pair{\la}$ to $\pair{\la^k}$. Then if $\pair{\la}\sbs S$ is an arithmetic subgroup, this implies $\pair{\la^k}$ is also an arithmetic subgroup of $S$ by \cite[Thm.\ 6]{borel-density}. Hence $\pair{\la^k}$ is Zariski dense in $S$. Consequently, Proposition \ref{prop:arithmeticity} implies that if $M\rt_{\la^k}\Z$ is arithmetic, then also $M\rt_\la\Z$ is arithmetic. 
\end{rmk}

\section{Algebraic tori, character groups, and integral points}\label{sec:tori}

In this section we recall some facts about algebraic tori that will be needed in Sections \ref{sec:real} and \ref{sec:complex}. Our main reference is \cite{platonov-rapinchuk}. 

{\bf Algebraic tori and their character groups.} An \emph{algebraic torus} over $\Q$ is an algebraic group that is isomorphic to $(G_m)^r$ over $\bar\Q$, where $r=\dim T$. The \emph{character group} $X(T):=\Hom(T, G_m)$ of $T$ is isomorphic to $\Z^r$ and has the structure of a $\Z[\bb G]$-module, where $\bb G=\Gal(\bar\Q/\Q)$ is the absolute Galois group. The functor $T\mapsto X(T)$ defines a contravariant equivalence of categories between (algebraic tori defined over $\Q$) and (finitely-generated free-abelian $\Z[\bb G]$-modules) \cite[Thm.\ 2.1]{platonov-rapinchuk}. For example, a surjection $X(T)\onto X(S)$ between  character groups is induced by an embedding $S\hra T$, and conversely.

The action of $\bb G$ on $X(T)$ defines a homomorphism $\rho:\bb G\ra\GL_r(\Z)$. The fixed field of $\ker(\rho)\sbs\bb G$ is a finite Galois extension $P/\Q$, which is the \emph{splitting field} of $T$. It is the smallest field that satisfies the following equivalent properties (c.f.\ \cite[\S2.1.7]{platonov-rapinchuk}): 
\begin{enumerate}[(i)]
\item The torus $T$ is \emph{$K$-split}, i.e.\ there is an isomorphism $T\cong (G_m)^r$ defined over $K$. 
\item Every character $T\ra G_m$ is defined over $K$. 
\end{enumerate} 
The image $\im(\rho)\cong\bb G/\ker(\rho)$ is isomorphic to $\Gal(P/\Q)$. In particular, if $P$ is the splitting field for $T$, then $\Gal(P/\Q)$ acts faithfully on $X(T)$. 

Recall that a map of algebraic groups $T_1\ra T_2$ is an \emph{isogeny} if it is surjective with finite kernel. If $T_1,T_2$ are tori, there is an isogeny between them if and only if $X(T_1)\ot\Q\cong X(T_2)\ot\Q$ as $\Q[\bb G]$-modules. In particular, isogeny of algebraic tori is an equivalence relation \cite[\S2.1.7]{platonov-rapinchuk}. 

{\bf Quasi-split tori.} A torus is called \emph{quasi-split} if it is a finite product of tori of the form $R_{K/\Q}(G_m)$, c.f.\ \S\ref{sec:grunewald-platonov}. The field norm $N_{K/\Q}:K\ra\Q$ defines a character $N:R_{K/\Q}(G_m)\ra G_m$ that's defined over $\Q$, and its kernel is called the \emph{norm torus}, denoted  $R_{K/\Q}^1(G_m)$. The norm torus has $\Q$-rank 0, and its set of $\Z$-points is equal to the units in $K$ of norm 1, denoted $\ca O_K^1$. 

Quasi-split tori are characterized by the property that their character groups are permutations modules. If $T=R_{K/\Q}(G_m)$ and $P$ is the Galois closure of $K/\Q$, then $X(T)\cong\Z[G/H]$, where $G=\Gal(P/\Q)$ and $H=\Gal(P/K)$. Furthermore, the character group $X(T^1)$ of the norm torus $T^1\sbs T$  is isomorphic to the quotient $\Z[G/H]/\Z$ by the trivial sub-representation \cite[\S2.1.7]{platonov-rapinchuk}. 

A decomposition of $\Q[G/H]$ as a $G$-representation leads to a decomposition of $T=R_{K/\Q}(G_m)$, up to isogeny. More precisely, choose a decomposition $\Q[G/H]\cong\bigoplus X_{i,\Q}$ into irreducible representations of $G$, and define $X_i$ as the image of $\Z[G/H]$ under the projection $\Q[G/H]\onto X_{i,\Q}$. This leads to a commutative diagram 
\[\begin{xy}
(-20,15)*+{X(T)}="A";
(10,15)*+{\bigoplus X_i}="B";
(-20,-0)*+{X(T)\ot\Q}="C";
(10,-0)*+{\bigoplus X_{i,\Q}}="D";
{\ar "A";"B"}?*!/_3mm/{\phi};
{\ar@{^{(}->} "B";"D"}?*!/_3mm/{};
{\ar@{^{(}->} "A";"C"}?*!/^3mm/{};
{\ar "C";"D"}?*!/_3mm/{\cong};
\end{xy}\]
From the diagram, one finds that $\phi$ is injective and has finite cokernel. It follows that $T$ is isogenous to $\prod T_i$, where $T_i$ is the torus with $X(T_i)\cong X_i$. 

{\bf Integer points of a torus.} In order to apply the arithmeticity criterion Proposition \ref{prop:arithmeticity}, we need to be able to determine the rank of $S(\Z)$ as an abelian group for an arbitrary torus $S$. This rank is given by the following formula, which generalizes Dirichlet's unit theorem \cite[\S4.5]{platonov-rapinchuk}
\begin{equation}\label{eqn:rank}\rank S(\Z)=\rank_\R(S)-\rank_\Q(S).\end{equation}
Recall that the $\Q$-rank is the dimension of the largest subtorus $\Q$-split torus, and similarly for $\R$-rank. Next we explain how to compute the $\R$- and $\Q$-ranks of $S$ in terms of $X(S)$.  First observe that if $L$ is the splitting field of $S$, then $\rank_\Q(S)=\rank X(S)^{\Gal(L/\Q)}$ since a character is defined over $\Q$ if and only if it is fixed by the action of $\Gal(L/\Q)$. 

\begin{lem}\label{lem:rank}
Fix an algebraic torus $S$ defined over $\Q$, and denote its splitting field by $L$. Let $\tau\in\Gal(L/\Q)$ be complex conjugation. Then the $\R$-rank of $S$ is equal to the rank of $X(S)^\tau$ as an abelian group. 
\end{lem}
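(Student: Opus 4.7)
My plan is to deduce this lemma by direct analogy with the $\Q$-rank formula $\rank_\Q(S) = \rank X(S)^{\Gal(L/\Q)}$ recorded in the paragraph preceding the lemma; the only real content is a compatibility of Galois actions between $\Gal(\bar\R/\R)$ and $\Gal(L/\Q)$.

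First, I would observe that since $S$ is defined over $\Q \subset \R$, it makes sense over $\R$, and its splitting field over $\R$ is the compositum $L \cdot \R \subset \C$ (computed via the embedding of $L$ used to pick out $\tau$). The absolute Galois group $\Gal(\bar\R/\R) = \{1,\sigma\}$ is generated by complex conjugation $\sigma$, and its action on $X(S_{\bar\R})$ factors through $\Gal(L\R/\R)$. Under the fixed embedding $L \hookrightarrow \C$ that defines $\tau$, the restriction map $\Gal(L\R/\R) \to \Gal(L/\Q)$ is injective with image $\langle\tau\rangle$, and since character groups are unchanged under base change from $\bar\Q$ to $\bar\R$, the canonical identification $X(S_{\bar\R}) \cong X(S)$ intertwines the action of $\sigma$ with that of $\tau$.

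Second, once this compatibility is in hand, the formula $\rank_\R(S) = \rank X(S)^\tau$ follows by the identical argument used for the $\Q$-rank: a character $\chi$ of $S$ is defined over $\R$ if and only if it is fixed by the action of $\Gal(\bar\R/\R)$; the $\R$-rank of $S$ is equal to the rank of the lattice of $\R$-defined characters (an $\R$-split subtorus of $S$ is, up to isogeny, cut out by its $\R$-rational characters, exactly as in the $\Q$-case); and by step one this lattice is precisely $X(S)^\tau$.

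No serious obstacle is expected. The mildest subtlety is the compatibility of Galois actions in the first step, but this is essentially tautological from the definitions once the embedding $L \hookrightarrow \C$ is fixed. The rest of the argument is purely formal, transporting the $\Q$-rank proof verbatim across the inclusion $\Q \hookrightarrow \R$.
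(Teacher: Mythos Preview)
Your proposal is correct, but the paper argues differently. Rather than invoking the general formula $\rank_k(S)=\rank X(S)^{\Gal(\bar k/k)}$ with $k=\R$, the paper works explicitly: it uses the classification of free $\Z[\Z/2\Z]$-lattices (every such lattice is a direct sum of copies of $\Z$, $\Z_-$, and $\Z[\Z/2\Z]$) to write $X(S)\cong\Z[\Z/2\Z]^\alpha\oplus\Z^\beta\oplus\Z_-^\gamma$ as a $\pair{\tau}$-module, and then translates this via the torus/lattice dictionary into an $\R$-isomorphism $S\cong R_{\C/\R}(G_m)^\alpha\times G_m^\beta\times R_{\C/\R}^1(G_m)^\gamma$. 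Since the three basic real tori have $\R$-ranks $1$, $1$, $0$ respectively, one reads off $\rank_\R(S)=\alpha+\beta=\rank X(S)^\tau$.

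Your argument is cleaner and more uniform with the $\Q$-case, and it avoids appealing to the integral representation theory of $\Z/2\Z$. The paper's approach, on the other hand, yields the explicit structure of $S$ as a real torus, which is in the spirit of how the $\Z[\pair{\tau}]$-module structure of character lattices is used elsewhere in the paper (e.g.\ equations (\ref{eqn:transposition}) and (\ref{eqn:prime})). One small wording point in your step two: the $\R$-defined characters cut out the maximal $\R$-split \emph{quotient} of $S$, not a subtorus; your parenthetical ``up to isogeny'' is doing real work here, since the maximal split subtorus maps isogenously onto that quotient, which is what makes the ranks agree.
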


\begin{proof}
First we identify $S$ as a torus over $\R$. Consider the action of $\Gal(\C/\R)=\pair{\tau}$ on $X(S)$. Any free-abelian $\Z[\Z/2\Z]$-module decomposes as a direct sum of copies of the trivial representation $\Z$, the sign representation $\Z_-$, and the group ring $\Z[\Z/2\Z]$ (see e.g.\ \cite[\S74]{curtis-reiner-RTFGAA}). Hence, there is a decomposition
\[X(S)\cong\Z[\Gal(\C/\R)]^\al\oplus\Z^\be\oplus\Z_-^\ga,\]
for some $\al,\be,\ga\ge0$. The rank of $X(S)^\tau$ is equal to $\alpha+\beta$. Using the correspondence between $\Z[\Gal(\C/\R)]$-modules and tori defined over $\R$, we conclude that there is an isomorphism 
\[S\cong  R_{\C/\R}(G_m)^\al\ti G_m^\be\ti R_{\C/\R}^1(G_m)^\ga\]
defined over $\R$. (Compare with \cite[\S2.2.4]{platonov-rapinchuk}.) The $\R$-ranks of $R_{\C/\R}(G_m)$ and $R_{\C/\R}^1(G_m)$ are $1$ and $0$, respectively. Thus $\rank_\R(S)=\alpha+\beta$. This is equal to $\rank X(S)^\tau$, as computed above, which proves the lemma. 
\end{proof}

\section{Proof of Theorem \ref{thm:real}} \label{sec:real}

We begin with the easier direction. 

\un{(ii) implies (i)}. 
By assumption, after replacing $A$ with a power, we have $\chi(A)=\mu_1^{n_1}\cdots\mu_m^{n_m}$, where $\mu_i$ is an degree-2 polynomial. Denoting $\la_i$ a root of $\mu_i$, we are assuming that there exists a quadratic extension $L/\Q$ and a unit $\ep\in\ca O_L^\ti$ so that $\la_i=\ep^{\ell_i}$ for some $\ell_i\neq0$. 

We want to show that $\Ga_A$ is arithmetic. Note that our replacement of $A$ with $A^k$ at the beginning does not change arithmeticity of $\Ga_A$ by Remark \ref{rmk:supergroup}. By \S\ref{sec:NT} and Proposition \ref{lem:char-poly}, we can replace $\Ga_A$ with the fiberwise commensurable subgroup
\begin{equation}\label{eqn:torus-group}
(\ca O_L^{n_1}\oplus\cdots\oplus\ca O_L^{n_m})\rt_{(\ep^{\ell_1},\ldots,\ep^{\ell_m})}\Z.\end{equation}
We show this group is arithmetic using Proposition \ref{prop:arithmeticity}. Setting $S=R_{L/\Q}(G_m)$ and $T=S^m$, the Zariski closure of $\pair{(\ep^{\ell_1},\ldots,\ep^{\ell_m})}\sbs T$ is the subgroup 
\[\Delta=\{x\in S^m: \phi(x_i)^{\ell_j}=\phi(x_j)^{\ell_i}\text{ for all }\phi\in X(S)\}.\] 
Note that $\Delta\cong S$, so $\rank\Delta(\Z)=\rank S(\Z)=1$. Proposition \ref{prop:arithmeticity} implies that the group in (\ref{eqn:torus-group}) is arithmetic, and so $\Ga_A$ is also arithmetic.

\un{(i) implies (ii)}. Fix $A\in\GL_n(\Z)$ such that $\Ga_A$ is arithmetic, and assume all the eigenvalues of $A$ are real. We want to show that there is $k\ge0$ and a quadratic extension $L/\Q$ so that the eigenvalues of $A^k$ are powers of a fundamental unit $\ep\in\ca O_L^\ti$. 

We use arithmeticity of $\Ga_A$ to obtain information about the eigenvalues of $A$. Write $\chi(A)=\mu_1^{n_1}\cd\mu_m^{n_m}$ and $\la=(\la_1,\ldots,\la_m)$ with $\la_i$ a root of $\mu_i$. Denote the ring of integers $\ca O_i\sbs \Q(\la_i)$ and set $M=\bigoplus\ca O_i^{n_i}$. By \S\ref{sec:NT} and Proposition \ref{lem:char-poly}, arithmeticity of $\Ga_A$ implies arithmeticity of $\Ga_\la=M\rt_\la\Z$.

Set $T_i=R_{\Q(\la_i)/\Q}(G_m)$ and $T=\prod T_i$. Write $\la=(\la_1,\ldots,\la_m)\in\prod T_i(\Z)=T(\Z)$. Observe that $\la^2\in T^1(\Z)$, where $T^1=\prod R_{K_i/\Q}^1(G_m)\sbs T$. To see this, consider the short exact sequence (c.f.\ \S\ref{sec:tori})
\[0\ra R_{K_i/\Q}^1(G_m)\hra R_{K_i/\Q}(G_m)\xra{N}G_m\ra0.\]
Restricting attention to the $\Z$-points, since $G_m(\Z)=\{\pm1\}$, it follows that $\la_i^2\in R_{K_i/\Q}^1(G_m)$. We replace $\la$ by $\la^2$ (which corresponds to replacing $A$ by $A^2$).

Let $S\sbs T^1$ be the Zariski closure of $\pair{\la}$. Observe that $\dim(S)=1$ because 
\[1=\rank S(\Z)=\rank_\R(S)-\rank_\Q(S)=\rank_\R(S)=\dim(S)\]
The first equality holds because $\Ga_\la$ is arithmetic, c.f.\ Proposition \ref{prop:arithmeticity}; the second equality is Equation (\ref{eqn:rank}); the third equality holds because $S\sbs T^1$ and $\rank_\Q(T^1)=0$ (see \S\ref{sec:tori}); the final equality holds because $T$ and hence $S$ is defined over $\R$, by the assumption on eigenvalues of $A$. 

By definition $\dim(S)=1$ means that $X(S)\cong\Z$. Since the splitting field $L/\Q$ of $S$ embeds in $\Aut(X(S))\cong\{\pm1\}$, the group $\Gal(L/\Q)$ is either trivial or $\Z/2\Z$. If $\Gal(L/\Q)$ were trivial, then $S$ would split over $\Q$, so we conclude $\Gal(L/\Q)=\Z/2\Z$. Thus $L/\Q$ is a real quadratic extension. 

Next we show that the embedding $S\hra T^1$ factors through a diagonal embedding
\[S\hra S\ti \cdots\ti S\hra T_1^1\ti\cdots\ti T_m^1.\]
This is achieved by studying the surjection $f:X(T^1)\ra X(S)$ induced by the inclusion $S\hra T^1$. For each $i$, let $P_i$ be the Galois closure of $K_i/\Q$. The splitting field $P$ of $T$ is the smallest Galois extension of $\Q$ containing all the $P_i$. Denote $G=\Gal(P/\Q)$ and $H_i=\Gal(P/K_i)$. Then $X(T_i)\cong\Z[G/H_i]$ and $X(T^1)\cong\Z[G/H_i]/\Z$ (as discussed in \S\ref{sec:tori}). Denote $G':=\Gal(L/\Q)$. Since $L\sbs P$, there is a surjection $G\ra G'$ given by restricting an automorphism of $P$ to $L$. 

The map $f:X(T^1)\ra X(S)$ is a $\Z[G]$-module map. For each $i$, it restricts to the summand $X(T_i^1)\sbs X(T^1)$ giving a map
\[f_i:X(T_i^1)\cong\Z[G/H_i]/\Z\ra\Z[G']/\Z\cong X(S).\]
Such a map is determined by $f([eH_i])$. There are two cases to consider, depending on whether or not $H_i$ is a subgroup of $\ker(G\ra G')$. If $H_i\not\sbs\ker(G\ra G')$, then the only $\Z[G]$-module map $\Z[G/H_i]/\Z\ra\Z[G']/\Z$ is the zero map. If $H_i\sbs\ker(G\ra G')$, then for each $n\in\Z\cong\Z[G']/\Z$, there is a unique equivariant map with $f([eH_i])=n$. 

Observe that the map $f_i$ is nonzero for each $i$. To see this, suppose that $f_i=0$ for some $i$. Then $S\sbs T_1\ti\cd\ti T_m$ is contained in 
\[T_1\ti\cd\ti T_{i-1}\ti0\ti T_{i+1}\ti\cd\ti T_m,\]
but this forces $\la_i=0$, which leads to a contradiction. Then each $f_i$ is nontrivial, and surjects onto $\ell_i X(S)$ for some $\ell_i\in\Z\setminus\{0\}$. Note that $\ell_i X(S)$ is isomorphic to $X(S)$ as a $\Z[G]$ module, so the surjection $X(T_i^1)\onto \ell_i X(S)$ corresponds to an embedding $S\hra T_i^1$. 

Thus, our surjective map $f:X(T^1)\onto X(S)$ factors through surjections 
\[X(T_1^1)\oplus\cdots\oplus X(T_m^1)\onto X(S)\oplus\cdots\oplus X(S)\onto X(S).\]
The first map is given by $(\phi_1,\ldots,\phi_m)\mapsto (f_1(\phi_1),\ldots,f_m(\phi_m))$ and the second map is given by $(\psi_1,\ldots,\psi_m)\mapsto \psi_1+\cdots+\psi_m$. The latter surjection gives an embedding $S\hra S\ti\cd\ti S$. Observe that $S(\Z)\cong\Z\ti\Z/2\Z$ since $S=R_{L/\Q}^1(G_m)$ with $L/\Q$ a real quadratic extension (the torsion subgroup of $\ca O_L^\ti$ consists of roots of unity, so is $\{\pm1\}$ in the totally real case). Choose a fundamental unit $\ep\in S(\Z)$. After replacing $\la$ by $\la^2$, we can assume that $\la_i\in S(\Z)$ is a power of $\ep$ for each $i$. 

In summary, we've shown that if we replace $A$ with $A^2$, then the eigenvalues are all units in a quadratic extension $L/\Q$, and if we replace $A$ with $A^4$, then the eigenvalues are all powers of the fundamental unit $\ep\in\ca O_L^\ti$. This finishes the proof of ``(i) implies (ii)" and completes the proof of Theorem \ref{thm:real}.\qed

\section{Monodromies with complex eigenvalues}\label{sec:complex}

In \S\ref{sec:real} we related the arithmeticity of $\Ga_A=\Z^n\rt_A\Z$ to the reducibility of $\chi(A)$ when the eigenvalues of $A\in\GL_n(\Z)$ are real. Now we remove this restriction on eigenvalues, and focus on Question \ref{q:irreducible}. We prove Theorem \ref{thm:examples} in \S\ref{sec:examples} and Theorem \ref{thm:prime-dim} in \S\ref{sec:prime-dim}. 

\subsection{Proof of Theorem \ref{thm:examples}}\label{sec:examples}

To prove Theorem \ref{thm:examples}, given $k\ge1$, we find $n\ge k$, a degree-$n$ extension $K/\Q$, and $\la\in\ca O_K^\ti$ so that (1) the action of $\la$ on $\ca O_K\cong\Z^n$ is fully irreducible, and (2) the group $\Ga_\la=\ca O_K\rt_\la\Z$ is arithmetic. Our first step is to give a sufficient condition for the action of $\la$ on $K$ (and hence $\ca O_K$) to be (fully) irreducible. 

\begin{lem}[Irreducibility test]\label{lem:irreducible}
Let $K$ be a number field, and fix $\la\in\ca O_K^\ti$. Denote $P$ the Galois closure of $K/\Q$. Set $T=R_{K/\Q}(G_m)$, and let $S\sbs T$ be the Zariski closure of $\pair{\lambda}$. Then 
\begin{enumerate}[(a)]
\item The action of $\la$ on $K$ is irreducible if and only if $K=\Q(\la)$.  
\item If the action of $\la^k$ on $K$ is reducible for some $k\ge1$, then there exists a $\Z[\Gal(P/\Q)]$ permutation module $\Z[Y]$ so that the surjection $X(T)\onto X(S)$, induced by the inclusion $S\hra T$, factors through surjective maps \[X(T)\onto\Z[Y]\onto X(S)\]
of $\Z[\Gal(P/\Q)]$-modules. 
\end{enumerate} 
\end{lem}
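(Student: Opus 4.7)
My plan is to observe that a $\Q$-subspace of $K$ is invariant under multiplication by $\la$ if and only if it is a $\Q[\la]$-submodule, and then exploit that $\Q[\la]=\Q(\la)$ is itself a subfield of $K$ (because $\la$ is algebraic over $\Q$, so the finite-dimensional $\Q$-algebra $\Q[\la]\sbs K$ is a domain, hence a field). Consequently the $\Q[\la]$-submodules of $K$ are precisely the $\Q(\la)$-subspaces, so irreducibility of the $\la$-action on $K$ is equivalent to $\dim_{\Q(\la)}K=1$, i.e.\ to $K=\Q(\la)$.

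\textbf{Plan for (b).} The guiding idea is that reducibility of $\la^k$ should force $S$ into a proper quasi-split subtorus arising from a proper subfield, whose character group is automatically a permutation module. Applying (a) to $\la^k$, reducibility gives $K':=\Q(\la^k)\sbsneq K$. Set $T':=R_{K'/\Q}(G_m)$. The inclusion $K'\hra K$ induces a closed embedding of $\Q$-tori $T'\hra T$; on characters this is the natural surjection of permutation modules $\Z[G/H]\onto\Z[G/H']$, where $G=\Gal(P/\Q)$, $H=\Gal(P/K)$, and $H'=\Gal(P/K')$ (note $H\sbs H'$ since $K\sps K'$). Since $\la^k\in(K')^\ti=T'(\Q)$, the cyclic group $\pair{\la^k}$ lies in $T'$, and because $T'$ is Zariski-closed in $T$, so is its Zariski closure in $T$. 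By Remark \ref{rmk:supergroup} (we may harmlessly assume $\la$ has infinite order, since otherwise $S$ is finite and the statement is vacuous), this Zariski closure coincides with $S$, so $S\sbs T'$. Taking characters of the chain $S\hra T'\hra T$ yields the desired factorization $X(T)\onto X(T')\onto X(S)$ with $\Z[Y]:=X(T')$ a permutation $\Z[G]$-module, where $Y=G/H'$.

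\textbf{Expected obstacle.} I do not anticipate a serious obstacle. Part (a) is essentially linear algebra once one recognizes invariant subspaces as $\Q(\la)$-subspaces. In (b), the only point requiring a bit of care is the explicit description of the closed embedding $T'\hra T$ induced by the subfield inclusion $K'\sbs K$, and the identification of the induced map on characters with the canonical quotient $\Z[G/H]\onto\Z[G/H']$ of permutation modules; both of these follow from the contravariant equivalence between algebraic tori over $\Q$ and $\Z[\bb G]$-modules reviewed in \S\ref{sec:tori}, together with the compatibility $X(R_{K/\Q}(G_m))\cong\Z[G/\Gal(P/K)]$.
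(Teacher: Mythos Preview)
Your proposal is correct and follows essentially the same route as the paper. For (b) the arguments are identical: pass to the proper subfield $K'=\Q(\la^k)\sbsneq K$, use that $\pair{\la}$ and $\pair{\la^k}$ have the same Zariski closure (Remark~\ref{rmk:supergroup}) to place $S$ inside $T'=R_{K'/\Q}(G_m)$, and read off the factorization $X(T)\onto X(T')\onto X(S)$ with $X(T')\cong\Z[G/H']$ a permutation module. For (a) your argument is a slight streamlining of the paper's: where the paper invokes the primitive element theorem to exhibit an explicit $\la$-invariant decomposition $K\cong L\oplus\ep L\oplus\cdots\oplus\ep^d L$ with $L=\Q(\la)$, you observe directly that $\la$-invariant $\Q$-subspaces of $K$ are exactly the $\Q(\la)$-subspaces, so irreducibility is equivalent to $\dim_{\Q(\la)}K=1$. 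This is the same idea packaged more cleanly.
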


\begin{proof}[Proof of Lemma \ref{lem:irreducible}]
(a) If $K=\Q(\la)$, then since $\Q(\la)\cong\Q[t]/(\mu)$, where $\mu$ is the minimal polynomial of $\la$, then $\mu$ (which is the characteristic polynomial of the action of $\la$ on $K$) is irreducible because $K$ is a field. Conversely, if $L:=\Q(\la)$ is properly contained in $K$, then by the primitive element theorem, we can write $K=L(\ep)$ for some $\ep\in K$. Then $\la$ preserves the decomposition $K\cong L\oplus \ep L\oplus\cdots\oplus \ep^dL$, showing the action is reducible. 

(b) Using part(a), if $\la^k$ acts reducibly, take $L$ with $\la^k\in L\subsetneq K$. The inclusions $S\hra T$ factors through $T'=R_{L/\Q}(G_m)$, so there are surjections $X(T)\onto X(T')\onto X(S)$. Since $T'$ is quasi-split, $X(T')$ is a permutation module, as desired. (Here we have used that $\pair{\la}$ and $\pair{\la^k}$ have the same Zariski closure, c.f.\ Remark \ref{rmk:supergroup}.)
\end{proof}

Next we find algebraic tori $S$ with $\dim(S)$ large and $\rank S(\Z)=1$. 

\begin{prop}[High-dimensional tori with small $\R$-rank]\label{prop:inverse-galois}
For every $k\ge1$, there exists an algebraic torus $S$ defined over $\Q$ so that $\rank_\Q(S)=0$, $\rank_\R(S)=1$, and $\dim(S)=k$. 
\end{prop}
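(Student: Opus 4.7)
By the contravariant equivalence between algebraic tori over $\Q$ and finitely-generated free $\Z[\bb G]$-modules recalled in \S\ref{sec:tori}, constructing $S$ amounts to constructing its character group $M := X(S)$. Combining the identification $\rank_\Q S = \rank X(S)^{\bb G}$ with Lemma \ref{lem:rank}, the problem reduces to producing a free-abelian $\Z[\bb G]$-module $M$ of rank $k$ with $M^{\bb G} = 0$ and $\rank M^\tau = 1$, where $\tau \in \bb G$ is complex conjugation; one then lets $S$ be the torus with $X(S) = M$.

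The case $k = 1$ is immediate: take $S = R^1_{L/\Q}(G_m)$ for any real quadratic extension $L/\Q$. By the discussion in \S\ref{sec:tori}, $\rank_\Q S = 0$, and $\rank_\R S = 1$ since $S(\Z) = \ca O_L^1$ has rank $1$ by Dirichlet's unit theorem.

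For $k \geq 2$, the plan is to factor the $\bb G$-action through a finite quotient $\bb G \onto G$ with $G \cong S_{k+1}$ such that complex conjugation is sent to a transposition $\tau \in S_{k+1}$. A quotient of this form corresponds to a number field $P/\Q$ that is Galois with $\Gal(P/\Q) \cong S_{k+1}$ and in which complex conjugation is a transposition. The existence of such $P$ is a known instance of the inverse Galois problem: take $P$ to be the splitting field of a degree-$(k+1)$ polynomial $f(x) \in \Z[x]$ with Galois group $S_{k+1}$ (generic behavior, by Hilbert irreducibility) having exactly $k - 1$ real roots and one pair of complex-conjugate roots (arrangeable by tuning coefficients); then complex conjugation permutes the roots of $f$ as a transposition.

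Over this $G = S_{k+1}$, I will take $M$ to be an $S_{k+1}$-stable $\Z$-lattice in the $\Q$-representation $V$ defined as the tensor product of the sign character with the standard $k$-dimensional representation $V_{\text{std}}$, then pull it back along $\bb G \onto G$. The verification is a character computation: $V$ is nontrivial and irreducible over $\Q$, so $V^{S_{k+1}} = 0$ and hence $M^{\bb G} = 0$; at a transposition $\tau$, $\chi_{V_{\text{std}}}(\tau) = k - 2$ (permutation character minus trivial), so $\chi_V(\tau) = 2 - k$, giving $\rank M^\tau = (\dim V + \chi_V(\tau))/2 = 1$. A short check shows $V$ is faithful on $S_{k+1}$ (no element can act as $-\id$ on $V_{\text{std}}$ for $k \geq 2$, since that would require character value $-k$, impossible for a permutation), so $P$ is the splitting field of $S$. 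The main obstacle is the inverse Galois input — specifically, producing a polynomial with both Galois group $S_{k+1}$ and the prescribed number of real roots — which I would handle by citing the classical results rather than reproving them.
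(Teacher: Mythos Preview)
Your proposal is correct and follows essentially the same approach as the paper: construct the character lattice inside the representation $\Q^k\otimes\Q_-$ of $S_{k+1}$, invoke the inverse Galois problem to realize $S_{k+1}$ as a Galois group over $\Q$ with complex conjugation a transposition, and verify the $\Q$- and $\R$-ranks via the character of $\tau$. The only cosmetic differences are that the paper treats $k=2$ separately (noting the general construction also covers it, as you do) and cites \cite{kluners-malle} for the inverse Galois input rather than sketching the Hilbert-irreducibility-plus-real-root-count argument.
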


\begin{proof}
This is easy for $k=1,2$. For $k=1$ one may take $S=R_{L/\Q}^1(G_m)$, where $L/\Q$ is a real quadratic extension. For $k=2$, consider $S=R_{L/\Q}^1(G_m)$, where $L/\Q$ is an imaginary cubic extension. 

Assume $k\ge3$ (the following construction will also work for $k=2$). First we build a $\Z[S_{k+1}]$-module $X$ so that a transposition $\tau\in S_{k+1}$ acts in a special way. Then we use the inverse Galois problem to show there is a torus $S$ with $X(S)=X$. 

Consider the $\Q[S_{k+1}]$ module $X_\Q=\Q^{k}\ot\Q_-$, where $\Q^k$ is the standard representation and $\Q_-$ is the sign representation. Choose a module map $\Q[S_{k+1}]\onto X_\Q$, and let $X\sbs X_\Q$ be the image of $\Z[S_{k+1}]\sbs\Q[S_{k+1}]$. The module $X$ has a finite-index submodule isomorphic to $\Z^k\ot\Z_-$. Then for any transposition $\tau\in S_{k+1}$, 
\begin{equation}\label{eqn:transposition}X\cong \Z[\Z/2\Z]\oplus(\Z_-)^{k-2}\text{ as $\Z[\pair{\tau}]$ modules}.\end{equation}

{\it Claim.} There exists a Galois extension $P/\Q$ so that (i) $\Gal(P/\Q)\cong S_{k+1}$ and (ii) under this isomorphism, complex conjugation corresponds to a transposition $\tau\in S_{k+1}$. 

The claim is a special case of the inverse Galois problem. While this problem is open in general, it has been solved for symmetric groups \cite[Prop.\ 2]{kluners-malle}.

Now we use the claim. The claim is equivalent to the existence of a representation 
\[\rho:\Gal(\bar\Q/\Q)\onto S_{k+1}\sbs\GL(X)\] with complex conjugation mapping to $\tau$. The fixed field of $\ker(\rho)$ is the desired Galois extension $P/\Q$. Under the correspondence between $\Z[\Gal(\bar\Q/\Q)]$-modules and algebraic tori (c.f.\ \S\ref{sec:tori}), this implies that there is a torus $S$ with $X(S)\cong X$. 

The $\Q$-rank of $S$ is 0 because $X(S)\cong X$ does not contain a trivial sub-representation. In addition $\rank_\R(S)=1$ by Equation (\ref{eqn:transposition}) and Lemma \ref{lem:rank}. This proves the proposition. 
\end{proof}

Given $k\ge2$, take $S$ as in Proposition \ref{prop:inverse-galois}. By Equation (\ref{eqn:rank}), $\rank S(\Z)=1$. Fix an infinite order element $\la\in S(\Z)$. The surjection $\Z[S_{k+1}]\onto X$ from the proof of Proposition \ref{prop:inverse-galois} induces an embedding $S\hra R_{P/\Q}(G_m)$ where $P/\Q$ is a Galois extension with $\Gal(P/\Q)=S_{k+1}$. Let $T\sbs R_{P/\Q}(G_m)$ be a minimal quasi-split torus in $T=R_{K/\Q}(G_m)$ that contains $S$. The group $\ca O_{K}\rt_\la\Z$ is arithmetic by Proposition \ref{prop:arithmeticity}. In addition the action of $\la$ on $\ca O_K$ is fully irreducible by Lemma \ref{lem:irreducible}. This proves that there is fully irreducible $A\in\GL_{n}(\Z)$ so that $\Ga_A$ is arithmetic, where $n=[K:\Q]=\dim(T)\ge\dim(S)=k$. This proves Theorem \ref{thm:examples}. \qed

\begin{example}
We give an example that demonstrates how Lemma \ref{lem:irreducible} can be further used to determine $n=[K:\Q]$ precisely. 

We show that there is fully irreducible $A\in\GL_{10}(\Z)$ so that $\Ga_A$ is arithmetic. First let $P/\Q$ be a Galois extension with $\Gal(P/\Q)=S_5$ and complex conjugation a transposition, and let $S\sbs R_{P/\Q}(G_m)$ be a torus constructed as in the proof of Theorem \ref{thm:examples}. Choose $K\sbs P$ so that $\Gal(P/K)\cong A_4$. 
As an $S_5$-representation, $\Q[S_5/A_4]$ has character 
\[\begin{array}{c|cccccccc}
\text{conjugacy classes of $S_5$}&()&(12)&(12)(34)&(123)&(1234)&(12345)&(12)(345)\\\hline
\text{character of }\Q[S_5/A_4]&10&0&2&4&0&0&0
\end{array}\]
By computing inner product of characters one finds that $\Q^4\ot\Q_-$ appears in $\Q[S_5/A_4]$ with multiplicity 1. Then $S$ is contained in $T=R_{K/\Q}(G_m)\sbs R_{P/\Q}(G_m)$. We want to show that $T$ is a minimal quasi-split torus containing $S$. We prove this using Lemma \ref{lem:irreducible}(b). 

We want to show that there is no $H\sbs S_5$ so that (i) there is a surjection $\Q[S_5/A_4]\onto\Q[S_5/H]$ and (ii) $\Q^4\ot\Q_-$ is a subrepresentation of $Q[S_5/H]$. Condition (i) implies $H\sbs S_5$ has index $\le10$, since $\Q[S_5/A_4]$ has dimension 10. There is no need to consider the case $[S_5:H]=10$. The subgroups satisfying $[S_5:H]<10$ are $A_5$, $S_4$, and the affine group $C_5\rt C_4$. It is straight-forward to check that $\Q[S_5/H]$ does not contain $\Q^4\ot\Q_-$ as a subrepresentation for each of these subgroups $H$.

\end{example}

\begin{rmk}[Infinitely many examples up to commensurability]
By the result of \cite{kluners-malle} used above, there are infinitely many different $P$ with $\Gal(P/\Q)=S_{k+1}$ and complex conjugation acting as a transposition. Repeating the above construction with different fields $P$ then leads to infinitely many non-commensurable examples (this follows from the discussion of \S\ref{sec:groups}, \ref{sec:arithmeticity}). 

There is another way to obtain infinitely many examples. If $\dim S\ge2$, then there is a surjection $\Z[G]\onto X(S)\oplus X(S)\cong X(S\ti S)$ (here it is relevant that every complex representation of $S_n$ is defined over $\Q$, so each irreducible $\Q[G]$ module appears in $\Q[G]$ with multiplicity equal to its dimension). Then if we repeat the argument above (now taking $T\sbs R_{P/\Q}(G_m)$ a minimal quasi-split torus containing $S\ti S$), the different embeddings $S\hra S\ti S$ lead to non-commensurable examples. Compare with \cite[Prop.\ 5.3]{grunewald-platonov-solvable}. 
\end{rmk}

\subsection{Proof of Theorem \ref{thm:prime-dim}}\label{sec:prime-dim}

Fix an irreducible $A\in\GL_p(\Z)$. Let $\la$ be an eigenvalue of $A$, and set $K=\Q(\la)$. By Proposition \ref{lem:char-poly}, to show that $\Ga_A=\Z^p\rt_A\Z$ is not arithmetic, it suffices to show that $\Ga_\la=\ca O_K\rt_\la\Z$ is not arithmetic. We proceed by considering a series of cases. 

The Galois closure $P$ of $K/\Q$ is either totally real or totally imaginary. 

{\bf Case: $P$ totally real.} In this case, $\Ga_\la$ is non-arithmetic by \cite[proof of Thm.\ 1.3]{grunewald-platonov-polycyclic}. It can also be seen from the proof of Theorem \ref{thm:real}, since the proof shows that if $K$ is totally real and $\ca O_K\rt_\la\Z$ is arithmetic, then $K$ contains a quadratic subfield. But this is impossible if $[K:\Q]$ is an odd prime. 

{\bf Case: $P$ totally imaginary.} The Galois group $G=\Gal(P/\Q)$ is a transitive subgroup of $S_p$. We use two theorems that restrict $G$; see \cite[Thms.\ 1 and 3]{neumann}. 
\begin{itemize}
\item (Burnside) If $G\sbs S_p$ is transitive and not solvable, then $G$ is 2-transitive. 
\item (Galois) If $G\sbs S_p$ is transitive and solvable, then $G\sbs \Z/p\Z\rt(\Z/p\Z)^\ti$. 
\end{itemize} 

In Galois's theorem, the permutation action of $\Z/p\Z\rt(\Z/p\Z)^\ti$ is by affine transformations on $\{0,1,\ldots,p-1\}\cong\Z/p\Z$, i.e.\ given $s\in(\Z/p\Z)^\ti$ and $r\in\Z/p\Z$, define a permutation $x\mapsto sx+r$. 

We consider separately the cases $G$ solvable or not.

{\bf Case: $G$ not solvable.} Set $H=\Gal(P/K)$, and let $\tau\in G\sbs S_p$ be the element that acts by complex conjugation. If $k$ denotes the number of fixed points of $\tau$ acting on $G/H$, then 
\begin{equation}\label{eqn:prime}\Z[G/H]/\Z\cong \Z[\Z/2\Z]^{\fr{p-k}{2}}\oplus\Z^{k-1}\end{equation}
Note that $k\ge1$ because $p=|G/H|$ is odd. To see the isomorphism (\ref{eqn:prime}) concretely, write $G/H=\{y_1,\ldots,y_p\}$ with $y_1,\ldots,y_{p-k}$ permuted in pairs, and $y_{p-k+1},\ldots,y_p$ fixed. Then $[y_1],\ldots,[y_{p-1}]$ forms a basis for $\Z[G/H]/\Z$, and the action of $\tau$ is apparent from this description.

Set $T=R_{K/\Q}^1(G_m)$. By Lemma \ref{lem:rank} and equation (\ref{eqn:prime}), $\rank T(\Z)=\fr{p-k}{2}+(k-1)$. Then $p\ge5$ and $k\ge1$ implies that $\rank T(\Z)\ge2$. Now we use the following lemma. 

\begin{lem}\label{lem:2-transitive}
Let $G$ be any group, and let $H\sbs G$ be a subgroup. If $G$ acts $2$-transitively on $G/H$, then $\C[G/H]\cong\C\oplus V$, and $V$ is irreducible. 
\end{lem}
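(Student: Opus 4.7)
\textbf{Proof plan for Lemma \ref{lem:2-transitive}.} My approach is through the standard double-coset/orbit count for endomorphisms of permutation representations, together with Schur's lemma. The main identities I will use are the two equalities
\[
\dim \Hom_G\bigl(\C,\,\C[G/H]\bigr) \;=\; \#\{G\text{-orbits on } G/H\},
\]
\[
\dim \End_G\bigl(\C[G/H]\bigr) \;=\; \#\{G\text{-orbits on } (G/H)\times(G/H)\},
\]
both of which follow from Frobenius reciprocity (or equivalently, from counting fixed points via Burnside's lemma applied to the characters).

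First I would observe that since $G$ acts transitively on $G/H$ (this is automatic, and is implied by $2$-transitivity), the first equality gives $\dim \Hom_G(\C,\C[G/H])=1$. Hence the trivial representation appears in $\C[G/H]$ with multiplicity exactly one, and we may write a $G$-equivariant decomposition $\C[G/H]\cong \C\oplus V$ where $V$ does not contain the trivial representation as a summand.

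Next, $2$-transitivity means $G$ has exactly two orbits on $(G/H)\times(G/H)$: the diagonal and its complement. The second equality then gives $\dim \End_G(\C[G/H])=2$. Decomposing $V=\bigoplus_i V_i^{\oplus m_i}$ into pairwise non-isomorphic irreducibles (none of them trivial, by the previous paragraph), Schur's lemma yields
\[
\dim \End_G\bigl(\C[G/H]\bigr) \;=\; \dim \End_G(\C) + \sum_i m_i^2 \;=\; 1 + \sum_i m_i^2.
\]
Combining with $\dim \End_G(\C[G/H])=2$ forces $\sum_i m_i^2=1$, so there is exactly one irreducible constituent and it occurs with multiplicity one, i.e., $V$ is irreducible.

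I do not anticipate a serious obstacle: the lemma is a textbook consequence of the orbit-counting formula, and the only thing to be careful about is citing (rather than re-deriving) the two identities above, for which a reference such as Serre's \emph{Linear Representations of Finite Groups} or Curtis--Reiner suffices.
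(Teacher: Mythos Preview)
Your proof is correct and follows essentially the same approach as the paper: both arguments establish that $\dim\End_G(\C[G/H])=\langle\chi_{\C[G/H]},\chi_{\C[G/H]}\rangle=2$ and then invoke Schur's lemma. The only cosmetic difference is that the paper applies Frobenius reciprocity to rewrite this inner product as $\langle 1,\Res_H^G\chi_{\C[G/H]}\rangle$, i.e.\ the number of $H$-orbits on $G/H$ (which is $2$ by $2$-transitivity), whereas you phrase the same count as the number of $G$-orbits on $(G/H)\times(G/H)$; these two counts are equivalent via the standard bijection between $H$-orbits on $G/H$ and $G$-orbits on $(G/H)^2$.
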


Lemma \ref{lem:2-transitive} is an exercise in \cite[Exercise 7.2]{serre-reps-finite-groups}, whose solution we sketch. To show $V$ is irreducible, show $\pair{\chi_V,\chi_V}=1$, where $\pair{\cdot,\cdot}$ is the usual inner product on characters. Since $\chi_V=1-\chi_{\C[G/H]}$, this reduces to showing that $\pair{\chi_{\C[G/H]},\chi_{\C[G/H]}}=2$. Note that $\C[G/H]=\Ind_H^G(\C)$ is induced from the trivial representation of $H$, so by Frobenius reciprocity, 
\[\pair{\chi_{\C[G/H]},\chi_{\C[G/H]}}=\pair{\Ind_H^G(1),\chi_{\C[G/H]}}=\pair{1,\Res_H^G \chi_{\C[G/H]}},\]
and the right-hand side is the number of fixed points of $H$ acting on $G/H$. 

The lemma implies that $X(T)\cong\Z[G/H]/\Z$ is irreducible as a $\Z[G]$-module, which implies that $T$ has no nontrivial sub-torus. Hence $\pair{\la}\sbs T$ is Zariski dense for every infinite order $\la\in T(\Z)$. Since $\rank T(\Z)\ge2$, this implies $\ca O_K\rt_\la\Z$ is non-arithmetic by Proposition \ref{prop:arithmeticity}. 

{\bf Case: $G$ solvable.} By Galois's theorem, $G$ is a subgroup of $\Z/p\Z\rt(\Z/p\Z)^\ti$. Write $C_\ell=\Z/\ell\Z$. Observe that $C_p\sbs G$ since $G$ transitive implies that $p$ divides $|G|$ which implies that $G$ has an element of order $p$ (Cauchy's theorem). Then $G$ has the form $C_p\rt H$, where $H\cong C_q$ for some $q$ dividing $p-1$ (recall that $C_p^\ti\cong C_{p-1}$). 

The group $G=C_p\rt H$ has a presentation 
\begin{equation}\label{eqn:presentation}G=\pair{r,s\mid r^p=1=s^q, rs=sr^a},\end{equation}
where $r$ and $s$ generate $C_p$ and $H$ respectively, and $a\in\{1,\ldots,p-1\}\cong C_p^\ti$ is an element of order $q$.

We will argue as in the non-solvable case that $\Z[G/H]/\Z$ is an irreducible $\Z[G]$-module. If $|H|=p-1$ (this is the largest $H$ can be), then $G$ acts 2-transitively on $G/H$ and we can apply Lemma \ref{lem:2-transitive} in the same way. However, if $|H|<p-1$, then $G$ is not 2-transitive, and a different argument is needed. 

\begin{prop}\label{prop:permutation-rep-solvable-case}
Let $G=C_p\rt H$, where $H\sbs C_p^\ti\cong C_{p-1}$ has order $q<p-1$. Then $\Q[G/H]\cong\Q\oplus V$, and $V$ is irreducible.
\end{prop}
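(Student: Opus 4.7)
The plan is to exhibit an explicit irreducible $\Q[G]$-module $W$ of dimension $p-1$ and then identify $\Q[G/H]\cong\Q\oplus W$. The natural candidate is the cyclotomic field $\Q(\zeta_p)$, viewed as a $\Q[G]$-module by letting $r\in C_p$ act by multiplication by $\zeta_p$ and $s\in H$ act by the Galois automorphism corresponding to $a^{-1}\in(\Z/p\Z)^\ti$ under the isomorphism $\Gal(\Q(\zeta_p)/\Q)\cong(\Z/p\Z)^\ti$, where $a$ is as in (\ref{eqn:presentation}). A direct computation shows this respects the defining relation $sr=r^as$.

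First I would prove irreducibility of $\Q(\zeta_p)$ as a $\Q[G]$-module. This is slick: any nonzero $\Q[G]$-submodule is in particular a $\Q[C_p]$-submodule, hence closed under multiplication by every element of $\Q[\zeta_p]=\Q(\zeta_p)$. Since $\Q(\zeta_p)$ is one-dimensional as a module over itself, any such submodule equals $\Q(\zeta_p)$. Note that this argument uses nothing about the order of $H$; in particular the case $q=p-1$ (treated separately via Lemma \ref{lem:2-transitive}) would go through the same way.

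Next I would show $\Q[G/H]\cong\Q\oplus\Q(\zeta_p)$ by comparing characters, which classify $\Q$-representations up to isomorphism in characteristic zero. Identifying $G/H\cong\bb F_p$ with $G$ acting by affine transformations $x\mapsto hx+n$, the permutation character of $\Q[G/H]$ takes the values $p$ at the identity, $0$ on $C_p\sm\{e\}$, and $1$ on elements with nontrivial $H$-component (since $(1-h)x=n$ has a unique solution when $h\neq1$). For $\Q(\zeta_p)$, the trace at $(n,1)\in C_p$ equals $\mathrm{Tr}_{\Q(\zeta_p)/\Q}(\zeta_p^n)$, which is $p-1$ when $n=0$ and $-1$ when $n\neq0$; adding $\chi_\Q\equiv1$ matches the permutation character on $C_p$.

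The main obstacle is the trace of $(n,h)$ with $h\neq1$ acting on $\Q(\zeta_p)$. Working in the basis $1,\zeta_p,\ldots,\zeta_p^{p-2}$, the action sends $\zeta_p^k$ to $\zeta_p^{n+bk}$ for some $b\not\equiv1\pmod p$ determined by $h$; this is either a basis vector (contributing a $\delta$-term from the unique solution of $k\equiv bk+n\pmod p$) or equals $\zeta_p^{p-1}=-1-\zeta_p-\cdots-\zeta_p^{p-2}$ (contributing $-1$'s along the diagonal). A direct bookkeeping check shows these two contributions always cancel, yielding trace $0$; adding $\chi_\Q=1$ then matches the permutation character value $1$. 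With character equality established, $V=\Q[G/H]/\Q\cong\Q(\zeta_p)$, which is irreducible by the first step.
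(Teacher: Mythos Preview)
Your argument is correct and takes a genuinely different, more elementary route than the paper. The paper works over $\C$ first: it invokes the classification of complex irreducibles of the metacyclic group $G=C_p\rt H$, decomposes $\C[G/H]\cong\C\oplus L_1^G\oplus\cdots\oplus L_{(p-1)/q}^G$ into induced representations, observes that the $\chi_i$ form a single $\Gal(\bar\Q/\Q)$-orbit, and then appeals to Schur index theory to conclude that the complement $V$ is $\Q$-irreducible. You instead produce the irreducible $\Q[G]$-module directly as $\Q(\zeta_p)$, with irreducibility coming for free from the field structure (any $C_p$-stable $\Q$-subspace is a $\Q(\zeta_p)$-subspace), and then match characters. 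Your trace computation on elements with nontrivial $H$-part is correct: the unique $k$ with $bk+n\equiv k$ lies outside the basis range $\{0,\dots,p-2\}$ if and only if the unique $k$ with $bk+n\equiv p-1$ does, so the two contributions always cancel.

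What each approach buys: the paper's argument exposes the $\C$-decomposition and ties into the general Schur-index framework, which is conceptually informative; your argument is shorter, avoids the Schur index entirely, and---as you note---handles the $q=p-1$ case uniformly, so Lemma~\ref{lem:2-transitive} is not actually needed for this part of the paper.
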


\begin{proof}
First we identify $\C[G/H]$ as a $\C[G]$-module. The group $G$ is metacyclic, and its representations over $\C$ are well-known; see e.g.\ \cite[\S47]{curtis-reiner-RTFGAA}. 

The group $G$ has $q+\fr{p-1}{q}$ irreducible representations over $\C$. There are $q$ one-dimensional representations, namely those that factor through the abelianization $G\onto H\cong C_q$. There are $\fr{p-1}{q}$ irreducible $q$-dimensional representations, each induced from an irreducible representation of $C_p$. For $0\le i\le p-1$, let $L_i=\C\{\ell_i\}$ denote the representation of $C_p=\pair{r}$ with action $r(\ell_i)=\ze^i\cdot\ell_i$, where $\ze=e^{2\pi i/p}$ is a primitive $p$-th root of unity. Write 
\[L_i^G:=\C[G]\ot_{\C[H]}L_i\]
for the induced representation. Then $L_i^G$ is irreducible for $1\le i\le p-1$, and $L_1^G,\ldots,L_{(p-1)/q}^G$ are distinct irreducible representations of $G$. 

{\it Claim.} There is an isomorphism of $\C[G]$-modules 
\[\C[G/H]\cong\C\oplus L_1^G\oplus\cdots\oplus L_{(p-1)/q}^G.\]

{\it Discussion of the claim.} The claim can be proved by computing the character of the representation on each side. This is a straightforward computation, so we omit the details. However, below we will use the explicit computation of the character $\chi_i$ of $L_i^G$, so we record it here. 

Fix $1\le i\le\fr{p-1}{q}$. It is easy to compute $\chi_i(s^j)=0$ for each $1\le j\le q-1$. Note that $\{s^jr^i:0\le i\le p-1\}$ forms a conjugacy class (one can show this directly using the presentation (\ref{eqn:presentation})). Then it remains to compute $\chi_i(r^k)$ for $1\le k\le p-1$. Consider the basis $\{1\ot\ell_i, s\ot\ell_i,\ldots,s^{q-1}\ot\ell_i\}$ for  $L_i^G$. In this basis, an easy computation shows that the action of $r\in G$ has matrix 
\[\left(\begin{array}{ccccc}
\ze^i\\&\ze^{ia}\\&&\ze^{ia^2}\\&&&\ddots\\&&&&\ze^{ia^{q-1}}
\end{array}\right)\]
Then $\chi_i(r^k)=(\ze^{ik})+(\ze^{ik})^{a}+\cdots+(\ze^{ik})^{a^{q-1}}$. Observe that $\chi_i(r^k)=\chi_i(r^{ka})$, which agrees with the fact that the conjugacy classes of elements of $\{1,r,\ldots,r^{p-1}\}$ is the same as orbits of the $H$-action on $C_p$. This concludes the discussion of the claim.

With the setup above, we can prove the proposition. From the computation of $\chi_i$, we deduce that the characters $\chi_1,\ldots,\chi_{(p-1)/q}$ for a single orbit under the natural action of $\Gal(\bar\Q/\Q)$. Then the sum of these characters is $\Q$-valued, and there is a smallest $m\ge1$ so that 
\[m(\chi_1+\cdots+\chi_{(p-1)/q})\] 
is realized as a character of $G$-representation defined over $\Q$, and this representation is irreducible over $\Q$. Here $m$ is \emph{Schur index}; see \cite[Cor.\ 10.2]{isaacs}. If $V$ is a complement to $\Q\sbs\Q[G/H]$ (which exists because $\Q[G]$ is semisimple), then $\chi_V=\chi_1+\cdots+\chi_{(p-1)/q}$ (this is the straightforward part of the claim above, whose proof was omitted). This implies that $m=1$ and that $V$ is irreducible. 
\end{proof}

Now we can conclude as in the non-solvable case, replacing Lemma \ref{lem:2-transitive} with Proposition \ref{prop:permutation-rep-solvable-case}: The proposition implies that $T=R_{K/\Q}^1(G_m)$ has no nontrivial sub-torus. Hence to show $\ca O_K\rt_\la\Z$ is non-arithmetic, it suffices to show that $\rank T(\Z)\ge2$, or equivalently (since $\rank_\Q(T)=0$), to show that $\rank_\R(T)\ge2$, c.f.\ Equation (\ref{eqn:rank}). To show $\rank_\R(T)\ge2$ we use Lemma \ref{lem:rank}. 

Let $\tau\in G$ be complex conjugation. Since $P$ is totally imaginary, $\tau$ is nontrivial. Every involution in $G=C_p\rt H$ is conjugate into $H$, and $H\sbs C_{p-1}$ has a unique element of order 2, which acts on $C_p=\pair{r}$ by $r\mapsto r^{-1}$. Then the action of $\tau$ on $G/H=\{eH, rH,\ldots,r^{p-1}H\}$ has a single fixed point, so $\tau$ acts on $\Z[G/H]/\Z$ in the same way that $\Z/2\Z$ acts on $\Z[\Z/2\Z]^{(p-1)/2}$. Hence $X(T)^\tau$ has rank $(p-1)/2\ge2$. This shows $\rank_\R(T)\ge2$, as desired.

This completes the proof of Theorem \ref{thm:prime-dim}.\qed

\begin{rmk}
Note that Theorem \ref{thm:prime-dim} is not true for the primes $p=2,3$. This is obvious for $p=2$. For $p=3$, if $K/\Q$ has 1 real embedding, then the Galois closure $P$ of $K/\Q$ has $\Gal(P/\Q)=S_3$. Then $T=R_{K/\Q}^1(G_m)$ has $\rank T(\Z)=1$, so for any infinite order $\la\in T(\Z)$, the group $\ca O_K\rt_\la\Z$ is arithmetic. Furthermore, the action of $\la$ on $K$ is irreducible by Lemma \ref{lem:irreducible} since there are no intermediate subfields $\Q\sbs K'\sbs K$. 
\end{rmk}

\bibliographystyle{alpha}
\bibliography{ATB-bib}

Department of Mathematics, Brown University\\ 
\emph{Email address}: \texttt{bena\_tshishiku@brown.edu}

\end{document}